\def\marginpar#1{\ignorespaces}
\DeclareMathOperator\vol{Vol}
\newtheorem{theorem}{Theorem}
\newtheorem*{theorem*}{Main Results}
\newtheorem{assumption}{Assumption}
\newtheorem{remark}{Remark}
\newtheorem{lemma}{Lemma}
\begin{document}
\title[Simulated annealing]{Simulated annealing from continuum to discretization: a convergence analysis via the Eyring--Kramers law}

\author[Wenpin Tang]{{Wenpin} Tang}
\address{Department of Industrial Engineering and Operations Research, Columbia University.
} \email{wt2319@columbia.edu}

\author[Xun Yu Zhou]{{Xun Yu} Zhou}
\address{Department of Industrial Engineering and Operations Research, Columbia University.}
\email{xz2574@columbia.edu}

\date{\today}
\begin{abstract}
We study the convergence rate of continuous-time simulated annealing $(X_t; \, t \ge 0)$
and its discretization $(x_k; \, k =0,1, \ldots)$ for approximating the global optimum of a given function $f$.
We prove that the tail probability $\mathbb{P}(f(X_t) > \min f +\delta)$ (resp. $\mathbb{P}(f(x_k) > \min f +\delta)$) decays polynomial in time (resp. in cumulative step size), and provide an explicit rate as a function of the model parameters.
Our argument applies the recent development on functional inequalities for the Gibbs measure at low temperatures -- the Eyring-Kramers law.
In the discrete setting, we obtain a condition on the step size to ensure the convergence.
\end{abstract}

\maketitle

\textit{Key words:} Simulated annealing, convergence rate, Euler discretization, Eyring-Kramers law, functional inequalities, overdamped Langevin equation.

\section{Introduction}

\quad Simulated annealing (SA) is an umbrella term which denotes a set of stochastic optimization methods.
The goal of SA is to find the global minimum of a function $f: \mathbb{R}^d \to \mathbb{R}$, in particular when $f$ is non-convex.
These methods have many applications in physics, operations research and computer science,
see e.g. \cite{VA87, KAJ94, DCM19}.
The name is inspired from annealing in metallurgy, which is a process aiming to increase the size of crystals by heating and controlled cooling.
The stochastic version of SA was independently proposed by \cite{KGV83} and \cite{Cer85}.
The idea is as follows: consider a stochastic process related to $f$ which is subject to thermal noise.
When simulating this process, one decreases the temperature slowly over the time.
As a result, the stochastic process escapes from saddle points and local optima, and converges to the global minimum of $f$ with high probability.
This is generally true if the cooling is slow enough, and it is important to find the right stochastic process with the fastest possible cooling schedule which approximates the global optimum.

\quad In this paper, we explore the convergence rate of continuous-time SA and its discretization.
To be more precise, define the {\em continuous-time SA process} $(X_t; \, t \ge 0)$ by
\begin{equation}
\label{eq:SA}
dX_t = - \nabla f(X_t) dt + \sqrt{2 \tau_t} \, dB_t, \quad X_0 \stackrel{d}{=} \mu_0(dx),
\end{equation}
where $(B_t; \, t \ge 0)$ is $d$-dimensional Brownian motion, $\tau_t$ is the cooling schedule of temperature, and $\mu_0(dx)$ is some initial distribution.
This formulation was first considered by \cite{GH86}.
If $\tau_t \equiv \tau$ is constant in time, the process \eqref{eq:SA} is the well-known {\em overdamped Langevin equation} whose stationary distribution is the Gibbs measure $\nu_{\tau}(dx) \propto \exp(-f(x)/\tau)dx$.
Thus, we sometimes  call the process \eqref{eq:SA} an {\em SA adapted overdamped Langevin equation} as well.
See Section \ref{sc3} for background.
We also consider the Euler-Maruyama discretization of the continuous-time simulated annealing process.
Let $\eta_k$ be the step size at iteration $k$, and $\Theta_k:=\sum_{j \le k} \eta_j$ be the cumulative step size up to iteration $k$.
The {\em discrete SA process} $(x_k; \, k = 0,1, \ldots)$ is defined by
\begin{equation}
\label{eq:discreteSA}
x_{k+1} = x_k - \nabla f(x_k) \eta_k + \sqrt{2 \tau_{\scaleto{\Theta_k}{5 pt}} \eta_k} Z_k, \quad x_0 \stackrel{d}{=} \mu_0(dx),
\end{equation}
where $(Z_k; \, k = 0, 1, \ldots)$ are independent and identically distributed standard normal vectors,
and $\tau_{\scaleto{\Theta_k}{5 pt}}$ is the cooling schedule at iteration $k$.
For $\tau_t \equiv \tau$ constant in time, the scheme \eqref{eq:discreteSA} is known as the
{\em unadjusted Langevin algorithm} (ULA) which approximates the Gibbs measure $\nu_{\tau}$.
The algorithm was introduced in \cite{Parisi81, GM94}, and further studied by \cite{RT96, DM17}.

\quad The goal of this paper is to study the decay in time of the tail probability
\begin{equation*}
\mathbb{P}(f(X_t) > \min f +\delta) \quad \mbox{or} \quad \mathbb{P}(f(x_k) > \min f+ \delta),
\end{equation*}
under suitable conditions on the function $f$, the cooling schedule $\tau_t$, and the discretization scheme $\eta_k, \Theta_k$.
Let us mention a few motivations.
First there are a line of works on the interplay between sampling and optimization (\cite{RRT17, MCC19, MCJ19}).
Note that if $\tau_t \equiv \tau$ is constant in time, the overdamped Langevin equation converges to the Gibbs measure
$\nu_{\tau}$,
and for $\tau$ sufficiently small, the Gibbs measure $\nu_{\tau}$ approximates the Dirac mass at the global minimum of $f$.
Combining these two ingredients, SA sets the cooling schedule $\tau_t$ to decrease to $0$ over the time.
It is then expected that the SA process \eqref{eq:SA} or \eqref{eq:discreteSA} converges to the global minimum as $t \to \infty$ or $k \to \infty$.
There are also recent works on various noisy gradient-based algorithms (\cite{GHJY15, JG17, CDT20, GHT20}), which aims to escape saddle points and find a local minimum of $f$.
In comparison with these methods, SA has priority to find the global minimum but at the cost of longer exploration time.

\quad The main tool in our analysis is the Eyring-Kramers law, which is a set of functional inequalities for the Gibbs measure at low temperatures.
Let us explain our results.
It was shown in \cite{GH86, CH87} that the correct order of $\tau_t$ for the process \eqref{eq:SA} to converge to the global minimum of $f$ is $(\ln t)^{-1}$.
In fact, there is a phase transition related to the {\em critical depth} $E_{*}$ of the function $f$:
\begin{enumerate}[itemsep = 3 pt]
\item[(a)]
If $\tau_t \le \frac{E}{\ln t}$ for $t$ large enough with $E < E_{*}$, then
$\limsup_{t \to \infty} \mathbb{P}(f(X_t) \le \min f + \delta) < 1$.
\item[(b)]
If $\tau_t \ge \frac{E}{\ln t}$ for $t$ large enough with $E > E_{*}$, then
$\lim_{t \to \infty} \mathbb{P}(f(X_t) \le \min f + \delta) = 1$.
\end{enumerate}
Roughly speaking, the critical depth $E_{*}$ is the largest hill one needs to climb starting from a local minimum to the global minimum.
The formal definition of the critical depth $E_{*}$ will be given in Assumption \ref{assump:nondeg}, but 
see Figure \ref{fig:CD} below for an illustration when $f$ is a double-well function.
\begin{figure}[h]
\centering
\includegraphics[width=0.6\columnwidth]{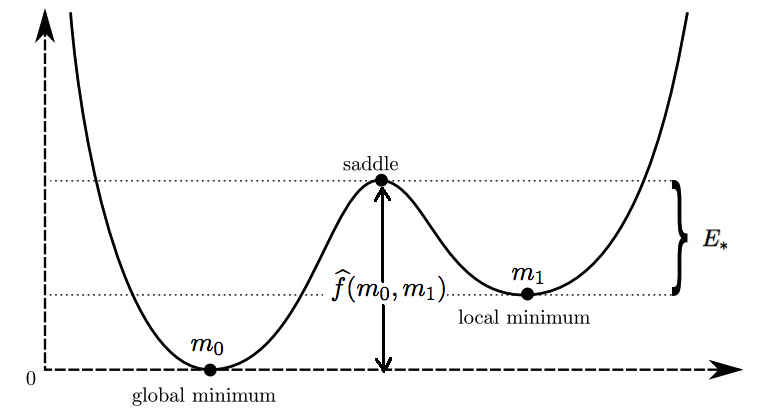}
\caption{Illustration of the critical depth of a double-well function.}
\label{fig:CD}
\end{figure}
Part $(a)$ was proved by \cite{HKS89} using a sophisticated argument that involves the Poincar\'e inequality.
Part $(b)$ was proved by \cite{Miclo92} who characterized the fastest cooling schedule by the Eying-Kramers law for the log-Sobolev inequality.
See also \cite{Miclo95, Zitt08, FT20} for similar results under different conditions on the function $f$.
The convergence rate of $(f(X_t); \, t \ge 0)$ to the global minimum of $f$ was considered by \cite{Mar97} only for $\delta$ sufficiently small.
However, the rate of $\mathbb{P}(f(X_t) > \min f +\delta)$  for general $\delta > 0$ has yet been studied since no estimates of the log-Sobolev inequality for the Gibbs measure at low temperatures were known until the mid-2010s. 
Taking advantage of recently developed theory (\cite{MS14, MSTW18, VW19}), we are able to give a non-asymptotic convergence rate of both continuous-time and discrete SA.

\quad To simplify the notation, we assume throughout this paper that
\begin{equation*}
\min_{\mathbb{R}^d} f(x) = 0,
\end{equation*}
i.e. the global minimum of $f$ is $0$ by considering $f - \min f$.
Our results are outlined as follows, and the precise statement of these results will be given in Section \ref{sc2}.
\begin{theorem*}[Informal]
Under some assumptions on the function $f$, we have:
\begin{enumerate}[itemsep = 3 pt]
\item[(i)]
Assume that  $\tau_t \ge \frac{E}{\ln t}$ for $t$ large enough with $E > E_{*}$.
For $\delta > 0$,
there exists $C > 0$ (depending on $\delta, f, E, d$) such that
\begin{equation*}
\mathbb{P}(f(X_t) > \delta) \le C t^{-\min\left(\frac{\delta}{E}, \frac{1}{2}(1 - \frac{E_{*}}{E}) \right)}.
\end{equation*}
\item[(ii)]
Assume that  $\tau_t \ge \frac{E}{\ln t}$ for $t$ large enough with $E > E_{*}$.
Also assume that $\Theta_k \to \infty$, $\eta_{k+1} \Theta_k \to 0$ and
$\frac{\ln \Theta_k}{\sum_{j \le k} \eta_{j+1} \Theta_j^{-E_{*}/E}} \to 0$ as $k \to \infty$.
For $\delta > 0$,
there exists $C > 0$ (depending on $\delta, f, E, d$) such that
\begin{equation*}
\mathbb{P}(f(x_k) > \delta) \le C \Theta_k^{-\min\left(\frac{\delta}{E}, \frac{1}{2}(1 - \frac{E_{*}}{E}) \right)}.
\end{equation*}
\end{enumerate}
\end{theorem*}

\quad The contribution of this paper is twofold:
\begin{itemize}[itemsep = 3 pt]
\item
{\bf Polynomial decay rate}.
We prove that the tail probability $\mathbb{P}(f(X_t) > \delta)$ (resp. $\mathbb{P}(f(x_k) > \delta)$) decays polynomially in time (resp. in cumulative step size).
In the continuous setting, \cite{M18} obtained the same rate of convergence for SA adapted underdamped Langevin equation, and \cite{MSTW18} considered an improvement of SA via parallel tempering.
However, the convergence rate for continuous-time SA, i.e. part $(i)$ has yet been recorded in the literature
though this result is probably understood in the Eyring-Kramers folklore.
We provide a self-contained treatment which bridges the literature and, more importantly, can be applied to obtain the new result in the discrete setting.
\item
{\bf Choice of step size}.
Part $(ii)$ for the discrete simulated annealing is completely novel to our best knowledge, and it also gives a practical guidance on the choice of step size for discretization.
The condition $\Theta_k \to \infty$ indicates that the step size cannot be chosen too small, while the condition $\eta_{k+1} \Theta_k \to 0$ suggests that the step size cannot be chosen too large.
The condition $\frac{\ln \Theta_k}{\sum_{j \le k} \eta_{j+1} \Theta_j^{-E_{*}/E}} \to 0$ is purely technical.
For instance, $\eta_{k} = k^{-\theta}$ with $\theta \in (\frac{1}{2}, 1]$ satisfies the conditions in $(ii)$ to ensure the convergence.
\end{itemize}
Also note that the rate $\min\left(\frac{\delta}{E}, \frac{1}{2}(1 - \frac{E_{*}}{E}) \right)$ is smaller than $\frac{1}{2}$.
It is interesting to know whether this rate is optimal, and we leave the problem for future work.

\quad There is another interesting problem: the dependence of the constant $C$ on the dimension $d$.
The issue is subtle, since most analysis including the Eyring-Kramers law uses Laplace's method.
However, Laplace's method may fail if both the dimension $d$ and the inverse temperature $1/\tau$ tend to infinity (\cite{SM95}).
As shown in Remark \ref{rk:Laplace}, we obtain an upper bound for $C$ which is exponential in $d$.
This suggests the convergence rate is exponentially slow as the dimension increases, which concurs with the fact that
finding the global minimum of a general nonconvex function is NP-hard (\cite{JK17}).

\quad Finally, we mention a few approaches to accelerate or improve SA.
\cite{FQG97} considered a cooling schedule depending on both time and state;
\cite{Pav07} used the L\'evy flight;
\cite{M18} studied SA adapted to underdamped Langevin equation;
\cite{MSTW18} applied the replica exchange technique;
\cite{GXZ20} employed a relaxed stochastic control formulation, originally proposed by \cite{WZZ} for reinforcement learning, to derive a state-dependent temperature control schedule. 

\medskip
The remainder of the paper is organized as follows. 
Section \ref{sc2} presents the assumptions and our main results.
Section \ref{sc3} provides background on diffusion processes and functional inequalities.
The result for the continuous-time simulated annealing (Theorem \ref{thm:contrate}) is proved in Section \ref{sc4}.
The result for the discrete simulated annealing (Theorem \ref{thm:discreterate}) is proved in Section \ref{sc5}.
We conclude with Section \ref{sc6}.

\section{Main results}
\label{sc2}

\quad In this section, we make precise  the informal statement in the introduction, and present the main results of the paper.
We first collect the notations that will be used throughout this paper.
\begin{itemize}[label = {--}, itemsep = 3 pt]
\item
The notation $| \cdot |$ is the Euclidean norm of a vector, and $a \cdot b$ is the scalar product of vectors $a$ and $b$.
\item
For a function $f: \mathbb{R}^d \to \mathbb{R}$,
let $\nabla f$, $\nabla^2 f$ and $\Delta f$ denote its gradient, Hessian and Laplacian respectively.
\item
For $X, Y$ two random variables, $||X - Y||_{TV}$ denotes the total variation norm of the signed measure corresponding to $X - Y$.
\item
The symbol $a \sim b$ means that $a/b \to 1$ as some problem parameter tends to $0$ or $\infty$.
Similarly, the symbol $a = \mathcal{O}(b)$ means that $a/b$ is bounded as some problem parameter tends to $0$ or $\infty$.
\end{itemize}
We use $C$ for a generic constant which depends on problem parameters ($\delta, f, E \ldots$), and may change from line to line.

\quad Next, we present a few assumptions on the function $f$.
These assumptions are standard in the study of metastability.

\begin{assumption}
\label{assump:reggrow}
Let $f: \mathbb{R}^d \to \mathbb{R}$ be smooth, bounded from below,
and satisfy the conditions:
\begin{enumerate}[itemsep = 3 pt]
\item[(i)]
$f$ is non-degenerate on the set of critical points. That is, for some $C > 0$,
\begin{equation*}
\frac{|\xi|}{C} \le |\nabla^2f(x) \xi| \le C|\xi| \quad \mbox{for each } x \in \{z: \nabla f(z) = 0\}.
\end{equation*}
\item[(ii)]
There exists $C > 0$ such that
\begin{equation*}
\liminf_{|x| \to \infty} \frac{|\nabla f(x)|^2 - \Delta f(x)}{|x|^2} \ge C, \quad \inf_{x} \nabla^2f(x) \ge -C.
\end{equation*}
\end{enumerate}
\end{assumption}

\quad Let us make a few comments on Assumption \ref{assump:reggrow}.
The condition $(ii)$ implies that $f$ has at least quadratic growth at infinity.
This is a necessary and sufficient condition to obtain the log-Sobolev inequality (see \cite[Theorem 3.1.21]{Royer07} and
Section \ref{sc32}) which is key to convergence analysis.
The conditions $(i)$ and $(ii)$ imply that the set of critical points is discrete and finite \cite[Remark 1.6]{MS14}.
In particular, it follows that the set of local minimum points $\{m_1, \ldots, m_N\}$ is also finite, with $N$ the number of local minimum points of $f$.

\quad To keep the presentation simple, we make additional assumptions on $f$, following \cite[Assumption 2.5]{MSTW18}.
Define the saddle height $\widehat{f}(m_i,m_j)$ between two local minimum points $m_i, m_j$ by
\begin{equation}
\widehat{f}(m_i,m_j) : = \inf \left\{\max_{s \in [0,1]} f(\gamma(s)): \gamma \in \mathcal{C}[0,1], \, \gamma(0) = m_i, \, \gamma(1) = m_j \right\}.
\end{equation}
See Figure \ref{fig:CD} for an illustration of the saddle height $\widehat{f}(m_0,m_1)$ when $f$ is a double-well function with $m_0$ the global minimum and $m_1$ the local minimum.
\begin{assumption}
\label{assump:nondeg}
Let $m_1, \cdots, m_N$ be the positions of the local minima of $f$.
\begin{enumerate}[label=(\roman*), itemsep = 3 pt]
\item
$m_1$ is the unique global minimum point of $f$, and $m_1, \ldots, m_N$ are ordered in the sense that there exists $\delta > 0$ such that
\begin{equation*}
f(m_N) \geq f(m_{N-1}) \geq \cdots \geq f(m_2) \ge \delta \quad \mbox{and} \quad f(m_1) = 0.
\end{equation*}
\item
For each $i, j \in \{1, \ldots, N\}$, the saddle height between $m_i, m_j$ is attained at a unique critical point $s_{ij}$ of index one. That is, $f(s_{ij}) = \widehat{f}(m_i,m_j)$, and if $\{\lambda_1, \ldots, \lambda_n\}$ are the eigenvalues of $\nabla^2 f(s_{ij})$, then $\lambda_1< 0$ and $\lambda_i > 0$ for $i \in \{2, \ldots, n\}$.
The point $s_{ij}$ is called the communicating saddle point between the minima $m_i$ and $m_j$.
\item
There exists $p \in [N]$ such that the energy barrier $f(s_{p1}) - f(m_p)$ dominates all the others. That is,
there exists $\delta > 0$ such that for all $i \in [N] \setminus \{p\}$,
\begin{equation*}
E_* := f(s_{p1}) - f(m_p) \ge f(s_{i1}) - f(m_i) + \delta.
\end{equation*}
The dominating energy barrier $E_*$ is called the critical depth.
\end{enumerate}
\end{assumption}

\quad The convergence result for the continuous-time SA \eqref{eq:SA} is stated as follows.
The proof will be given in Section \ref{sc4}.
\begin{theorem}
\label{thm:contrate}
Let $f$ satisfy Assumption \ref{assump:reggrow} $\&$ \ref{assump:nondeg}.
Assume that $\tau_t \sim \frac{E}{\ln t}$ and $\frac{d}{dt}\left(\frac{1}{\tau_t}\right) = \mathcal{O}\left(\frac{1}{t}\right)$
as $t \to \infty$ with $E > E_{*}$.
Also assume the moment condition for the initial distribution $\mu_0$: for each $p \ge 1$,
there exists $C_p > 0$ such that
\begin{equation}
\label{eq:moment}
\int_{\mathbb{R}^d} f(x)^p \mu_0(dx) \le C_p.
\end{equation}
Then for each $\delta, \varepsilon > 0$, there exists $C > 0$ (depending on $\delta, \varepsilon, f, \mu_0, E, d$) such that
\begin{equation}
\label{eq:main1}
\mathbb{P}(f(X_t) > \delta) \le C t^{-\min\left(\frac{\delta}{E}, \frac{1}{2}(1 - \frac{E_{*}}{E}) \right) + \varepsilon}.
\end{equation}
\end{theorem}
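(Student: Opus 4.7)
The plan is to compare the law $\mu_t$ of $X_t$ with the instantaneous Gibbs measure $\nu_t(dx) := Z_t^{-1} e^{-f(x)/\tau_t} dx$ in relative entropy, and then convert closeness to $\nu_t$ into a tail bound on $f(X_t)$. First I would write the Fokker--Planck equation $\partial_t \mu_t = \nabla \cdot (\mu_t \nabla f) + \tau_t \Delta \mu_t$ associated to \eqref{eq:SA}, and, using the algebraic identity $\mu_t \nabla f + \tau_t \nabla \mu_t = \tau_t \nu_t \nabla(\mu_t/\nu_t)$, differentiate $H(t) := H(\mu_t \,\|\, \nu_t)$ in time to obtain
\[
\dot H(t) = -\,\tau_t\, I(\mu_t \,\|\, \nu_t) + \dot\beta_t \left(\int f\,d\mu_t - \int f\,d\nu_t\right),
\]
where $\beta_t := 1/\tau_t$ and $I$ is the Fisher information. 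The first term is the standard dissipation; the second is the price paid for chasing a moving target.

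Next I would invoke the Eyring--Kramers estimate for the log-Sobolev constant of $\nu_t$ from \cite{MS14, MSTW18}: there is $\kappa \ge 0$ with $C_{\mathrm{LS}}(\nu_t) \le C\, \tau_t^{-\kappa}\, e^{E_*/\tau_t}$ for all sufficiently small $\tau_t$. Combined with the LSI $2H \le C_{\mathrm{LS}} I$ and with $\tau_t \sim E/\ln t$, $E > E_*$, this yields $\tau_t\, I(\mu_t\|\nu_t) \ge \alpha_t\, H(t)$ where $\alpha_t \sim c\, (\ln t)^{-(1+\kappa)}\, t^{-E_*/E}$; this is non-integrable at infinity precisely because $E_*/E < 1$. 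To handle the forcing term, I would bound $\int f\, d\mu_t$ uniformly in $t$ by a Lyapunov argument: applying It\^o to $f^p(X_t)$ and using the coercivity $|\nabla f|^2 - \Delta f \gtrsim |x|^2$ at infinity from Assumption \ref{assump:reggrow}(ii) propagates all moments of $f$ from the initial bound \eqref{eq:moment}. Similarly, $\int f\, d\nu_t = \mathcal{O}(\tau_t\, d)$ by Laplace's method at $m_1$. Together with $\dot\beta_t = \mathcal{O}(1/t)$, the forcing is $\mathcal{O}(1/t)$, producing the non-autonomous inequality
\[
\dot H(t) \le -\,\alpha_t\, H(t) + C/t.
\]

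Solving this with the integrating factor $\exp(\int_1^t \alpha_s\, ds)$ and using $\int_1^t \alpha_s\, ds \sim c\, t^{1 - E_*/E} (\ln t)^{-(1+\kappa)}$, I would deduce that for any $\varepsilon > 0$ there is $C > 0$ with $H(\mu_t\|\nu_t) \le C\, t^{-(1 - E_*/E) + \varepsilon}$. Pinsker then gives $\|\mu_t - \nu_t\|_{TV} \le C\, t^{-(1 - E_*/E)/2 + \varepsilon}$. A direct Laplace expansion at $m_1$, using the non-degeneracy of Assumption \ref{assump:nondeg}, yields $Z_t \gtrsim \tau_t^{d/2}$ and hence $\nu_t(\{f > \delta\}) \le C\, \tau_t^{-d/2}\, e^{-\delta/\tau_t} \le C\, t^{-\delta/E + \varepsilon}$. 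Splitting
\[
\mathbb{P}(f(X_t) > \delta) \le \nu_t(\{f > \delta\}) + \|\mu_t - \nu_t\|_{TV}
\]
and keeping the slower of the two rates gives \eqref{eq:main1}.

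The hard part will be the forcing term: getting a uniform-in-$t$ bound on $\int f \, d\mu_t$ requires a Lyapunov estimate for a time-inhomogeneous SDE whose noise amplitude $\sqrt{2\tau_t}$ is itself decaying, so one must verify that the coercivity in Assumption \ref{assump:reggrow}(ii) dominates the $\tau_t \Delta f$ contribution uniformly on a long time horizon, and that the initial moments \eqref{eq:moment} propagate without blow-up. The other delicate point is the $\varepsilon$-loss incurred in solving the Gronwall inequality, which is forced by the logarithmic prefactor $(\ln t)^{-(1+\kappa)}$ in $\alpha_t$ coming from the Eyring--Kramers correction $\tau_t^{-\kappa}$.
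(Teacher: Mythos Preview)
Your proposal is correct and follows essentially the same route as the paper: the paper also splits $\mathbb{P}(f(X_t)>\delta)\le \nu_{\tau_t}(\{f>\delta\})+\sqrt{2H(\mu_t|\nu_{\tau_t})}$ via Pinsker, derives the same entropy dissipation identity (its Lemma~\ref{lem:DI} is your formula after dropping the $-\dot\beta_t\int f\,d\nu_t$ term), bounds $\mathbb{E}f(X_t)\le C(1+t)^\varepsilon$ by the Miclo Lyapunov argument you describe, and closes with the Eyring--Kramers LSI bound and a Gr\"onwall step. The only cosmetic differences are that the paper states the Eyring--Kramers constant without a polynomial prefactor (your $\kappa=0$) and settles for an $(1+t)^\varepsilon$ moment bound rather than a uniform one---neither changes the final rate since both slacks are absorbed into the $\varepsilon$.
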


\quad To get the convergence result for the discrete simulated annealing, we need an additional condition on the function $f$.
\begin{assumption}
\label{assump:upper}
The gradient $\nabla f$ is globally Lipschitz, i.e. $|\nabla f(x) - \nabla f(y)| \le L |x - y|$ for some $L > 0$.
\end{assumption}

\quad The convergence result for the discrete simulated annealing \eqref{eq:discreteSA} is stated as follows.
The proof will be given in Section \ref{sc5}.
\begin{theorem}
\label{thm:discreterate}
Let $f$ satisfy Assumption \ref{assump:reggrow}, \ref{assump:nondeg} $\&$ \ref{assump:upper}, and let the condition \eqref{eq:moment} for $\mu_0$ hold.
Assume that $\tau_t \sim \frac{E}{\ln t}$ and $\frac{d}{dt}\left(\frac{1}{\tau_t}\right) = \mathcal{O}\left(\frac{1}{t}\right)$ as $t \to \infty$ with $E > E_{*}$.
Also assume that $\Theta_k \to \infty$,
\begin{equation}
\label{eq:dominate}
\eta_{k+1}\Theta_k \to 0,
\end{equation}
and
\begin{equation}
\label{eq:dominate2}
\frac{\ln \Theta_k}{\sum_{j \le k} \eta_{j+1} \Theta_j^{-E_{*}/E}} \to 0,
\end{equation}
as $k \to \infty$.
Then for each $\delta, \varepsilon > 0$, there exists $C > 0$ (depending on $\delta, \varepsilon, f, \mu_0, E, d$) such that
\begin{equation}
\label{eq:main2}
\mathbb{P}(f(x_k) > \delta) \le C \Theta_k^{-\min\left(\frac{\delta}{E}, \frac{1}{2}(1 - \frac{E_{*}}{E}) \right) + \varepsilon}.
\end{equation}
\end{theorem}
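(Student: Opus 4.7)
The plan is to mirror the continuous-time argument used for Theorem~\ref{thm:contrate} but carefully track the extra bias introduced by Euler--Maruyama discretization. Write $\mu_k$ for the law of $x_k$ and $\nu_k := \nu_{\tau_{\Theta_k}}$ for the instantaneous Gibbs measure at the current temperature, and decompose
\begin{equation*}
\mathbb{P}(f(x_k) > \delta) \;\le\; \|\mu_k - \nu_k\|_{TV} \;+\; \nu_k(\{f > \delta\}).
\end{equation*}
Laplace's method handles the concentration term: since $\tau_{\Theta_k} \sim E/\ln\Theta_k$, one has $\nu_k(\{f > \delta\}) \le C\,\Theta_k^{-\delta/E + \varepsilon}$, which already produces the $\delta/E$ branch of the exponent in \eqref{eq:main2}. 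For the total variation term, Pinsker's inequality reduces the problem to bounding the relative entropy $H_k := H(\mu_k \,\|\, \nu_k)$.

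The core of the argument is a one-step recursion for $H_k$, which has three sources. First, exponential relaxation of the one-step ULA kernel at temperature $\tau_{\Theta_k}$, governed by the log-Sobolev constant $\alpha_k$ of $\nu_k$; by the Eyring--Kramers estimates of \cite{MS14,MSTW18}, $\alpha_k \asymp \exp(-E_*/\tau_{\Theta_k}) = \Theta_k^{-E_*/E + o(1)}$, yielding a contraction factor $1 - c\alpha_k\eta_{k+1}$. Second, a discretization bias measuring the deviation of one Euler--Maruyama step from the exact Langevin semigroup; using Assumption~\ref{assump:upper} together with propagated moments of $x_k$, this bias is of order $\eta_{k+1}^2\,\mathrm{poly}(\ln\Theta_k)$. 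Third, a cooling bias from changing the reference measure from $\nu_k$ to $\nu_{k+1}$, controlled by $\frac{d}{dt}(1/\tau_t) = \mathcal{O}(1/t)$ and hence of order $\eta_{k+1}/\Theta_k$ times a bounded moment. Collecting these,
\begin{equation*}
H_{k+1} \;\le\; (1 - c\alpha_k \eta_{k+1})\, H_k \;+\; \epsilon_k,
\end{equation*}
with $\epsilon_k$ the sum of the discretization and cooling biases.

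Iterating this Gronwall-type recursion gives
\begin{equation*}
H_k \;\le\; H_0 \prod_{j<k} (1 - c\alpha_j\eta_{j+1}) \;+\; \sum_{j<k} \epsilon_j \prod_{i=j+1}^{k-1}(1 - c\alpha_i\eta_{i+1}).
\end{equation*}
The contraction product behaves like $\exp\bigl(-c\sum_j \eta_{j+1}\Theta_j^{-E_*/E}\bigr)$, which by hypothesis \eqref{eq:dominate2} decays faster than any negative power of $\Theta_k$, making the initial-condition term negligible. A careful summation of the convolution against $\epsilon_j$ then yields $H_k \le C\,\Theta_k^{-(1 - E_*/E) + \varepsilon}$, and combining Pinsker's inequality with the Laplace concentration bound delivers \eqref{eq:main2}.

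The main technical obstacle is calibrating the one-step discretization bias against the vanishingly small log-Sobolev constant $\alpha_k$. Since $\alpha_k^{-1}$ grows as $\Theta_k^{E_*/E}$, the ratio $\epsilon_k/(\alpha_k\eta_{k+1}) \asymp \eta_{k+1}\Theta_k^{E_*/E}$ must vanish, which (using $E_*/E < 1$) is exactly what is ensured by the assumption $\eta_{k+1}\Theta_k \to 0$ in \eqref{eq:dominate}. A secondary difficulty is that both the discretization and cooling biases require uniform-in-$k$ moment control on $x_k$; these moments must be propagated through the scheme from the initial condition \eqref{eq:moment} using the Lipschitz gradient (Assumption~\ref{assump:upper}) and the slow, logarithmic growth of $1/\tau_t$.
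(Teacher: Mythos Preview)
Your proposal is correct and follows essentially the same route as the paper: the same Pinsker-plus-Laplace decomposition, the same one-step entropy recursion with contraction, discretization, and cooling terms, and the same Gronwall iteration yielding $H_k \le C\Theta_k^{-(1-E_*/E)+\varepsilon}$. The only point worth flagging is that the paper makes the derivation of the one-step recursion concrete via a continuous-time interpolation of the discrete scheme (a piecewise SDE with frozen drift, following \cite{VW19}), which produces a Fokker-Planck equation with an explicit discretization-error term; you refer to ``the one-step ULA kernel'' and its deviation from the Langevin semigroup without naming this device, but it is exactly the mechanism you would need to turn your sketch into a proof.
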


\section{Preliminaries}
\label{sc3}

\quad In this section, we present a few vocabularies and basic results of diffusion processes and functional inequalities.
We also explain how these results are applied in the setting of SA, which will be useful in our convergence analysis.

\subsection{Diffusion processes and SA}
\label{sc31}
Consider the general diffusion process $(X_t; \, t \ge 0)$ in $\mathbb{R}^d$ of form:
\begin{equation}
\label{eq:diffusion}
dX_t = b(t, X_t) dt + \sigma(t, X_t) dB_t, \quad X_0 \stackrel{d}{=} \mu_0(dx),
\end{equation}
where $(B_t; t \ge 0)$ is a $d$-dimensional Brownian motion,
with the drift $b: \mathbb{R}_{+} \times \mathbb{R}^d \to \mathbb{R}^d$ and the covariance matrix $\sigma: \mathbb{R}_{+} \times \mathbb{R}^d \to \mathbb{R}^{d \times d}$.
To ensure the well-posedness of the SDE \eqref{eq:diffusion}, it requires some growth and regularity conditions on $b$ and $\sigma$.
For instance,
\begin{itemize}[itemsep = 3 pt]
\item
If $b$ and $\sigma$ are Lipschitz and have linear growth in $x$ uniformly in $t$, then \eqref{eq:diffusion} has a strong solution which is pathwise unique.
\item
If $b$ is bounded, and $\sigma$ is bounded, continuous and strictly elliptic, then \eqref{eq:diffusion} has a weak solution which is unique in distribution.
\end{itemize}
We refer to \cite{SV79, RW87} for background and further developments on the well-posedness of SDEs, and to
\cite[Chapter 1]{CE05} for a review of related results.

\quad Another important aspect is the distributional property of $(X_t; \, t \ge 0)$ governed by the SDE \eqref{eq:diffusion}.
Let $\mathcal{L}$ be the infinitesimal generator of the diffusion process $X$ defined by
\begin{align}
\label{eq:infg}
\mathcal{L}g(t,x) &: = b(t,x) \cdot \nabla g(x) + \frac{1}{2} \sigma(t,x) \sigma(t,x)^T:\nabla^2 g(x) \notag\\
&: = \sum_{i=1}^d b_i(t,x) \frac{\partial}{\partial x_i} g(x) + \frac{1}{2} \sum_{i,j = 1}^d \left( \sigma(t,x) \sigma(t,x)^T \right)_{ij} \frac{\partial^2}{\partial x_i \partial x_j} g(x),
\end{align}
and $\mathcal{L}^{*}$ be the corresponding adjoint operator given by
\begin{align}
\label{eq:adjoint}
\mathcal{L}^{*}g(t,x) & := -\nabla \cdot (b(t,x) g(x)) + \frac{1}{2} \nabla^2:(\sigma(t,x) \sigma(t,x)^T g(x)) \notag \\
&:= -\sum_{i = 1}^d \frac{\partial}{\partial x_i} (b_i(t,x) g(x)) + \frac{1}{2} \sum_{i,j = 1}^d \frac{\partial^2}{\partial x_i \partial x_j} (\sigma(t,x) \sigma(t,x)^T g(x))_{ij},
\end{align}
where $g: \mathbb{R}^d \to \mathbb{R}$ is a suitably smooth test function,
and $:$ denotes the Frobenius inner product which is the component-wise inner product of two matrices.
 The probability density $\rho_t(\cdot)$ of the process $X$ at time $t$ then satisfies the Fokker-Planck equation:
 \begin{equation}
 \label{eq:FPdiffusion}
 \frac{\partial \rho_t}{\partial t} = \mathcal{L}^{*} \rho_t.
 \end{equation}
Specializing \eqref{eq:FPdiffusion} to the SA process \eqref{eq:SA} with $b(t,x) = -\nabla f(x)$ and $\sigma(t,x)= \sqrt{2 \tau_t} \, I_d$, we have that the probability density $\mu_t(\cdot)$ of $X$ governed by the SDE \eqref{eq:SA} satisfies
\begin{equation}
\label{eq:FPSA}
\frac{\partial \mu_t}{\partial t}  = \nabla \cdot (\mu_t \nabla f) + \tau_t \Delta \mu_t.
\end{equation}
Under further growth conditions on $b$ and $\sigma$,  it can be shown that as $t\to \infty$, 
$\rho_t(\cdot) \to \rho_{\infty}(\cdot)$  which is the stationary distribution of $(X_t; \, t \ge 0)$.
It is easily deduced from \eqref{eq:FPdiffusion} that $\rho_{\infty}$ is characterized by the equation $\mathcal{L}^{*} \rho_{\infty} = 0$; 
see \cite{EK86, MT933} for a general theory on stability of diffusion processes, and \cite[Section 2]{Tang19} for a summary with various pointers to the literature.

\quad However, for general $b$ and $\sigma$, the stationary distribution $\rho_{\infty}(\cdot)$ does not have a closed-form expression.
One good exception is $b(t,x) = - \nabla f(x)$ and $\sigma(t,x) = \sqrt{2 \tau} \, I_d$, where $X$ is governed by the overdamped Langevin equation:
\begin{equation}
\label{eq:overdamped}
dX_t = - \nabla f(X_t) dt + \sqrt{2 \tau} \, dB_t, \quad X_0 \stackrel{d}{=} \mu_0(dx).
\end{equation}
Such a process is time-reversible, and
the stationary distribution, under some growth condition on $f$, is the Gibbs measure
\begin{equation}
\label{eq:Gibbs}
\nu_{\tau}(dx) = \frac{1}{Z_{\tau}} \exp \left(-\frac{f(x)}{\tau} \right)dx,
\end{equation}
where $Z_{\tau}:= \int_{\mathbb{R}^d} \exp(-f(x)/\tau)dx$ is the normalizing constant.
Much is known about the overdamped Langevin dynamics.
For instance, if $f$ is $\lambda$-convex (i.e. $\nabla^2 f + \lambda I_d$ is positive definite),
the overdamped Langevin process \eqref{eq:overdamped} converges exponentially in the Wasserstein metric with rate $\lambda$ to the Gibbs measure $\nu_{\tau}$  \cite[]{CMV06}.
See also \cite{BGL14} for modern techniques to analyze the evolution of the overdamped Langevin equation and generalizations.

\quad Now we turn to the SA process \eqref{eq:SA}.
The difference between the overdamped Langevin process \eqref{eq:overdamped} and the process \eqref{eq:SA} is that the temperature $\tau_t$ of the latter is decreasing in time.
Due to the time dependence, the limiting distribution of SA is unknown.
As we will see in Section \ref{sc4}, the idea is to approximate \eqref{eq:SA} by a process of Gibbs measures with temperature $\tau_t$.
Since $\tau_t$ decreases to $0$ in the limit, the problem boils down to studying Gibbs measures at low temperatures.
In the next section, we recall some results of Gibbs measures at low temperatures,
which are motivated by applications in molecular dynamics and Bayesian statistics.

\subsection{Functional inequalities and the Erying-Kramers law}
\label{sc32}
Here we present functional inequalities of Gibbs measures at low temperatures $(\tau \to 0)$.
Let $\mu$ and $\nu$ be two probability measures on $\mathbb{R}^d$ such that $\mu$ is absolutely continuous relative to $\nu$, with $d\mu/d\nu$ the Radon-Nikodym derivative.
Define the relative entropy or KL-divergence $H(\mu|\nu)$ of $\mu$ with respect to $\nu$ by
\begin{equation}
\label{eq:KL}
H(\mu|\nu):= \int \log\bigg( \frac{d\mu}{d\nu} \bigg) d\mu = \int \frac{d\mu}{d\nu} \log\bigg( \frac{d\mu}{d\nu} \bigg)d \nu,
\end{equation}
and the Fisher information $I(\mu|\nu)$ of $\mu$ with respect to $\nu$ by
\begin{equation}
\label{eq:FI}
I(\mu|\nu):= \frac{1}{2}\int \bigg|\nabla \bigg(\frac{d\mu}{d\nu}\bigg) \bigg|^2 \bigg( \frac{d \mu}{d \nu}\bigg)^{-1} d\nu.
\end{equation}
We say that the probability measure $\nu$ satisfies the log-Sobolev inequality (LSI) with constant $\alpha > 0$,
if for all probability measures $\mu$ with $I(\mu|\nu) < \infty$,
\begin{equation}
H(\mu|\nu) \le \frac{1}{\alpha} I(\mu|\nu).
\end{equation}
The constant $\alpha$ is called the LSI constant for the probability measure $\nu$.
For instance, the LSI constant $\alpha = 1$ for $\nu$ the multivariate Gaussian with mean $0$ and covariance matrix $I_d$.

\quad The LSI also plays an important role in studying the convergence rate of the overdamped Langevin equation.
Recall that $\nu_{\tau}$ is the Gibbs measure defined by \eqref{eq:Gibbs}, and assume that $\nu_{\tau}$ satisfies the LSI with constant $\alpha_{\tau} > 0$.
It follows from \cite[Theorem 1.7]{Sch12} that by letting $\mu_{\tau, t}$ be the probability distribution of $X_t$ defined by \eqref{eq:overdamped},
we have
\begin{equation}
\label{eq:entropydecay}
H(\mu_{\tau,t}|\nu_{\tau}) \le e^{-2 \tau \alpha_{\tau} t}H(\mu_{\tau, 0}|\nu_{\tau}).
\end{equation}
So larger the value of $\alpha_{\tau}$ is, faster the convergence of the overdamped Langevin process in the KL divergence is.
The subscript `$\tau$' in $\alpha_{\tau}$ suggests the dependence of the LSI constant on the temperature $\tau$, and we are interested in the asymptotics of $\alpha_{\tau}$ at low temperatures as $\tau \to 0$.
This problem was considered by \cite[Corollary 3.18]{MS14}, who derived a sharp lower bound for $\alpha_{\tau}$ as $\tau \to 0$.
\begin{lemma}
\label{fact:EK}
Let $f$ satisfy Assumption \ref{assump:reggrow} $\&$ \ref{assump:nondeg}. Then
the Gibbs measure $\nu_{\tau}$ defined by \eqref{eq:Gibbs} satisfies the LSI with constant $\alpha_{\tau} > 0$ such that
\begin{equation}
\label{eq:EKformula}
\alpha_{\tau} \sim C \exp\left( -\frac{E_{*}}{\tau}\right) \quad \mbox{as } \tau \to 0,
\end{equation}
where $C > 0$ depends on $f, d$.
\end{lemma}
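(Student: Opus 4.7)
The plan is to follow the two-scale strategy of Menz and Schlichting. I would begin by partitioning $\mathbb{R}^d$ into basins of attraction $\Omega_1,\ldots,\Omega_N$ for the local minima $m_1,\ldots,m_N$ via the downward gradient flow of $f$, with boundaries on the stable manifolds of the saddles. This yields a decomposition $\nu_\tau=\sum_i p_i(\tau)\,\nu_\tau^i$, where $p_i(\tau):=\nu_\tau(\Omega_i)$ and $\nu_\tau^i$ is the conditional Gibbs measure on $\Omega_i$. For any probability $\mu\ll\nu_\tau$ with $q_i:=\mu(\Omega_i)$, the relative entropy splits additively as
\[
H(\mu|\nu_\tau)=\sum_{i=1}^N q_i\,H(\mu^i|\nu_\tau^i)+H(q|p),
\]
and the task reduces to controlling each of these pieces by the Fisher information $I(\mu|\nu_\tau)$.

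For the intra-basin term, within each $\Omega_i$ the function $f$ is a smooth perturbation of its non-degenerate quadratic approximation at $m_i$, so Assumption \ref{assump:reggrow}(i) combined with the Bakry--Émery criterion for the quadratic core and a Holley--Stroock perturbation for the bounded remainder yields local LSIs with constants $\alpha_i(\tau)$ of polynomial order in $\tau^{-1}$ — crucially \emph{not} exponentially small. The coercivity in Assumption \ref{assump:reggrow}(ii) handles the unbounded basin of $m_1$. The delicate step is bounding the inter-basin term $H(q|p)$. Laplace's method around each $m_i$ gives $p_i(\tau)\sim C_i\tau^{d/2}e^{-f(m_i)/\tau}$, and the transition cost between basin $i$ and basin $1$ through the saddle $s_{i1}$ is an Eyring--Kramers capacity of order $\tau^{-1/2}e^{-(f(s_{i1})-f(m_i))/\tau}$ up to Hessian-determinant factors. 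By Assumption \ref{assump:nondeg}(iii) the dominating barrier is $E_*$, so the resulting discrete LSI constant on the weighted simplex $(\{1,\ldots,N\},p(\tau))$ satisfies $\beta(\tau)\sim Ce^{-E_*/\tau}$, and combining with the local LSIs through the two-scale identity gives $\alpha_\tau\gtrsim Ce^{-E_*/\tau}$.

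\textbf{The hard part} is the capacity-to-Fisher comparison needed to prove $H(q|p)\lesssim e^{E_*/\tau}\,I(\mu|\nu_\tau)$. One constructs smooth mollified basin indicators as test functions whose Dirichlet energy concentrates in thin tubes around the heteroclinic orbits through the dominating saddle $s_{p1}$, and performs Gaussian integration transverse to these tubes to recover the correct Hessian prefactors. Verifying that the worst-case transport for $H(q|p)$ indeed localizes near $s_{p1}$ rather than spreading across the landscape — and that Assumptions \ref{assump:reggrow} and \ref{assump:nondeg} give \emph{uniform} control of the quadratic approximations at every critical point — is where the bulk of the technical work lies; the $\asymp$ matching of the $\tau^{d/2}$ and $\tau^{-1/2}$ prefactors from Laplace's method then absorbs into the unspecified constant $C$ in \eqref{eq:EKformula}.
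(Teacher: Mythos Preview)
Your proposal is correct in spirit, but note that the paper does not actually prove this lemma: it simply cites \cite[Corollary 3.18]{MS14} and records the result. What you have written is a faithful outline of the Menz--Schlichting two-scale argument itself, which is precisely the content of that cited reference. So you are not taking a different route from the paper; you are filling in what the paper deliberately black-boxes.

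One small caution on your sketch: the entropy decomposition you write,
\[
H(\mu|\nu_\tau)=\sum_{i=1}^N q_i\,H(\mu^i|\nu_\tau^i)+H(q|p),
\]
is exact, but the corresponding splitting on the Fisher-information side is \emph{not} additive --- there are cross terms coming from the boundaries of the $\Omega_i$. Controlling those boundary contributions (showing they are lower order compared to the saddle capacities) is part of the ``hard part'' you flag, and in \cite{MS14} this is handled via a mean-difference estimate and a weighted transport argument rather than by a direct capacity-to-Fisher comparison. Your description of mollified basin indicators and tube localization is closer in flavor to the Bovier--Eckhoff--Gayrard--Klein potential-theoretic approach for the spectral gap; the LSI version in \cite{MS14} proceeds somewhat differently, via a discrete log-Sobolev inequality on the simplex combined with the local LSIs. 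If you were to write this out in full, that distinction would matter.
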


\quad The Eyring-Kramers law provides an estimate on the spectral gap of the overdamped Langevin equation \eqref{eq:overdamped}.
It dates back to \cite{Eyring35, Kramers40} in the study of metastability in chemical reactions, and is proved rigorously by \cite{Bovier04, Bovier05}.
Lemma \ref{fact:EK} is the LSI version of the Eyring-Kramers law, which is stronger than the spectral gap estimate implied by the Poincar\'e inequality.

\quad Further define the Wasserstein distance $W_2(\mu, \nu)$ between $\mu$ and $\nu$ by
\begin{equation}
\label{eq:W2}
W_2(\mu, \nu): = \inf_{\Pi}\sqrt{ \int |x - y|^2 \Pi(dx,dy)},
\end{equation}
where the infimum is over all joint distributions $\Pi$ coupling $\mu$ and $\nu$.
We say that the probability measure $\nu$ satisfies Talagrand's inequality with constant $\gamma > 0$, if for all probability measure $\mu$ with $H(\mu|\nu)< \infty$,
\begin{equation}
\label{eq:Talagrand}
W_2(\mu, \nu) \le \frac{2}{\gamma}H(\mu|\nu).
\end{equation}
It follows from \cite[Theorem 1]{OV00} that the LSI implies Talagrand's inequality with the same constant.
That is,
if $\nu$ satisfies the LSI with constant $\alpha > 0$, then $\nu$ also satisfies Talagrand's inequality with constant $\gamma = \alpha$.
Combining with Lemma \ref{fact:EK}, we get a lower bound estimate of Talagrand's inequality constant for the Gibbs measure $\nu_{\tau}$.
\begin{lemma}
\label{fact:Tala}
Let $f$ satisfy Assumption \ref{assump:reggrow} $\&$ \ref{assump:nondeg}.
Then the Gibbs measure $\nu_{\tau}$ defined by \eqref{eq:Gibbs} satisfies Talagrand's inequality with constant $\gamma_{\tau} > 0$ such that
\begin{equation}
\label{eq:EKformulaTala}
\gamma_{\tau} \sim C \exp\left( -\frac{E_{*}}{\tau}\right) \quad \mbox{as } \tau \to 0
\end{equation}
where $C > 0$ depends on $f, d$.
\end{lemma}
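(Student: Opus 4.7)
The plan is to deduce Lemma \ref{fact:Tala} directly by combining two facts already assembled in the preceding discussion: the Eyring--Kramers log-Sobolev estimate of Lemma \ref{fact:EK}, and the Otto--Villani theorem which states that a log-Sobolev inequality with constant $\alpha$ implies Talagrand's transportation inequality with the same constant. Since both inputs are quoted in the text, the proof amounts to stringing them together.

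More concretely, I would proceed as follows. First, invoke Lemma \ref{fact:EK}: under Assumption \ref{assump:reggrow} and Assumption \ref{assump:nondeg}, the Gibbs measure $\nu_\tau$ satisfies LSI with constant $\alpha_\tau$ obeying $\alpha_\tau \sim C\exp(-E_*/\tau)$ as $\tau \to 0$. Second, apply \cite[Theorem 1]{OV00}, which asserts that if a probability measure $\nu$ on $\mathbb{R}^d$ satisfies LSI with constant $\alpha > 0$, then for every probability measure $\mu$ with $H(\mu|\nu) < \infty$ one has $W_2(\mu,\nu) \le (2/\alpha) H(\mu|\nu)$, i.e. Talagrand's inequality \eqref{eq:Talagrand} holds with $\gamma = \alpha$. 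Setting $\gamma_\tau := \alpha_\tau$ then yields the estimate \eqref{eq:EKformulaTala}.

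There is essentially no obstacle here: Assumption \ref{assump:reggrow}(ii) guarantees the required growth condition of $f$ so that the Otto--Villani theorem applies in its standard form (both $\nu_\tau$ and $\mu$ live on all of $\mathbb{R}^d$ and $\nu_\tau$ has finite second moment, as follows from the at-least quadratic growth of $f$), and the $C > 0$ in the asymptotic expression is just the same constant as the one produced by Lemma \ref{fact:EK}, with the same dependence on $f$ and $d$. Thus the lemma is really a corollary of Lemma \ref{fact:EK} plus Otto--Villani, and the proof can be presented in a few lines.
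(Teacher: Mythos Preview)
Your proposal is correct and matches the paper's own reasoning exactly: the paper likewise derives Lemma~\ref{fact:Tala} as an immediate corollary of Lemma~\ref{fact:EK} via the Otto--Villani theorem \cite[Theorem~1]{OV00}, noting in the text just before the lemma that LSI with constant $\alpha$ implies Talagrand's inequality with $\gamma=\alpha$. There is nothing to add.
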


\section{Continuous-time simulated annealing}
\label{sc4}

\quad In this section, we prove Theorem \ref{thm:contrate} by using the ideas developed in \cite{Miclo92,M18,MSTW18}.
Let $\mu_t$ be the probability measure of $X_t$ defined by \eqref{eq:SA}.
The key idea is to compare $\mu_t$ with the time-dependent Gibbs measure $\nu_{\tau_t}$ given by
\begin{equation}
\label{eq:Gibbstime}
\nu_{\tau_t}(dx) = \frac{1}{Z_{\tau_t}} \exp\left(-\frac{f(x)}{\tau_t} \right) dx,
\end{equation}
where $Z_{\tau_t}: = \int_{\mathbb{R}^d} \exp(-f(x)/\tau_t)$ is the normalizing constant.
Note that $\nu_{\tau_t}$ will concentrate on the minimum point of $f$ as $t \to \infty$ since $\tau_t \to 0$ as $t \to \infty$.
We will see that $\nu_{\tau_t}$ is close to $\mu_t$ in some sense as $t \to \infty$.
The proof of Theorem \ref{thm:contrate} is broken into four steps.

\medskip
{\bf Step 1: Reduce $\mu_t$ to $\nu_{\tau_t}$.}
We establish a bound that relates $\nu_{\tau_t}$ to $\mu_t$.
Let $(\widetilde{X}_t; \, t \ge 0)$ be a process whose distribution is $\nu_{\tau_t}$ at time $t$, coupled with $(X_t; \, t \ge 0)$ on the same probability space.
Fix $\delta > 0$. We have
\begin{align}
\label{eq:Pbound}
\mathbb{P}(f(X_t) > \delta) &= \mathbb{P}(f(X_t) > \delta, f(\widetilde{X}_t)> \delta) + \mathbb{P}(f(X_t) > \delta, f(\widetilde{X}_t) \le \delta) \notag\\
& \le \mathbb{P}(f(\widetilde{X}_t)> \delta) + || X_t - \widetilde{X}_t||_{TV} \notag \\
& \le \mathbb{P}(f(\widetilde{X}_t)> \delta) + \sqrt{2 H(\mu_t|\nu_{\tau_t})},
\end{align}
where we use Pinsker's inequality \cite[Lemma 2.5]{Tsy09} in the last inequation.
Now the problem boils down to estimating $\mathbb{P}(f(\widetilde{X}_t)> \delta)$ and $H(\mu_t|\nu_{\tau_t})$.

\medskip
{\bf Step 2: Long-time behavior of $f(\widetilde{X}_t)$.}
We study the asymptotics of $\mathbb{P}(f(\widetilde{X}_t)> \delta)$ as $t \to \infty$.
The following lemma provides a quantitative estimate of how $\nu_{\tau_t}$, or equivalently $\widetilde{X}_t$ concentrates on the minimum point of $f$ as $t \to \infty$.
\begin{lemma}
\label{lem:firterm}
Let $f$ satisfy Assumption \ref{assump:reggrow} $\&$ \ref{assump:nondeg}.
Assume that $\tau_t \sim \frac{E}{\ln t}$ as $t \to \infty$ with $E > E_{*}$.
For each $\varepsilon \in (0, \delta)$, there exist $C > 0$ (depending on $\delta, \varepsilon, f, E, d$) such that
\begin{equation}
\label{eq:nuest}
\mathbb{P}(f(\widetilde{X}_t)> \delta) \le C t^{-\frac{\delta - \varepsilon}{E}}.
\end{equation}
\end{lemma}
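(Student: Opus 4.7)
The plan is to exploit the fact that $\widetilde{X}_t$ has distribution $\nu_{\tau_t}$ by construction, so the tail probability is simply the Gibbs-measure mass
\[
\mathbb{P}(f(\widetilde{X}_t)>\delta)=\nu_{\tau_t}(\{f>\delta\})=\frac{1}{Z_{\tau_t}}\int_{\{f>\delta\}}e^{-f(x)/\tau_t}\,dx,
\]
and to bound this ratio by a polynomial in $t$ via a temperature-scaling interpolation combined with Laplace's method.

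For the numerator, I would use the following standard trick: for any $\alpha\in(0,1)$,
\[
\int_{\{f>\delta\}}e^{-f(x)/\tau_t}\,dx\le e^{-(1-\alpha)\delta/\tau_t}\int_{\mathbb{R}^d}e^{-\alpha f(x)/\tau_t}\,dx=e^{-(1-\alpha)\delta/\tau_t}\,Z_{\tau_t/\alpha}.
\]
The at-least-quadratic growth from Assumption \ref{assump:reggrow}(ii) makes $Z_{\tau_t/\alpha}$ finite with uniformly controlled tails. Then I would apply Laplace's method to both $Z_{\tau_t}$ and $Z_{\tau_t/\alpha}$ as $\tau_t\to 0$. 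Under Assumption \ref{assump:reggrow}(i) and Assumption \ref{assump:nondeg}, the critical points are isolated and the unique global minimum $m_1$ (with $f(m_1)=0$) gives the leading contribution, while every other local minimum is exponentially suppressed since $f(m_i)\ge\delta>0$ for $i\ge 2$. Standard asymptotics then yield $Z_{\tau}\sim(2\pi\tau)^{d/2}/\sqrt{\det\nabla^2 f(m_1)}$ as $\tau\to 0$, so the ratio $Z_{\tau_t/\alpha}/Z_{\tau_t}$ is bounded by a constant depending only on $\alpha$ and $d$ (essentially $\alpha^{-d/2}$).

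Combining the two bounds gives $\nu_{\tau_t}(\{f>\delta\})\le C_\alpha\,e^{-(1-\alpha)\delta/\tau_t}$, and plugging in $\tau_t\sim E/\ln t$ converts the right-hand side into $t^{-(1-\alpha)\delta/E}$ up to a sub-polynomial correction. Given any $\varepsilon\in(0,\delta)$, it then suffices to choose $\alpha$ small enough that $(1-\alpha)\delta>\delta-\varepsilon$, with a little additional slack to absorb the $o(1)$ correction coming from the asymptotic relation $\tau_t\sim E/\ln t$ rather than equality. This yields $\mathbb{P}(f(\widetilde{X}_t)>\delta)\le Ct^{-(\delta-\varepsilon)/E}$, as claimed.

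The main technical obstacle is making Laplace's method fully rigorous with explicit constants, in particular: (i) a uniform tail bound ensuring that the contribution from $\{|x|\ge R\}$ is negligible uniformly in the slightly varying temperature $\tau_t/\alpha$, which is exactly what the quadratic-growth part of Assumption \ref{assump:reggrow}(ii) delivers, and (ii) tracking the $\alpha^{-d/2}$ prefactor, whose exponential-in-$d$ character is the source of the dimensional dependence mentioned in Remark \ref{rk:Laplace}. Note that the condition $E>E_*$ plays no role at this step; it only enters later when $H(\mu_t|\nu_{\tau_t})$ is controlled via the Eyring--Kramers lower bound (Lemma \ref{fact:EK}) on the log-Sobolev constant.
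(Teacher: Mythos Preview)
Your proposal is correct; both you and the paper write the probability as the ratio $Z_{\tau_t}^{-1}\int_{\{f>\delta\}}e^{-f/\tau_t}\,dx$ and use Laplace's method on the denominator. The difference is in how the numerator is handled. The paper bounds it directly: it splits $\{f>\delta\}$ into $\{|x|\le R\}$ and $\{|x|>R\}$, uses the crude bound $e^{-f/\tau_t}\le e^{-\delta/\tau_t}$ on the ball and the at-least-linear growth of $f$ on the complement, and obtains $\int_{\{f>\delta\}}e^{-f/\tau_t}\,dx\le e^{-\delta/\tau_t}\vol(B_R)(1+\mathcal{O}(\tau_t))$. This gives $\mathbb{P}(f(\widetilde X_t)>\delta)\le C\,t^{-\delta/E}(\ln t)^{d/2}$, and the $\varepsilon$-loss comes solely from absorbing the logarithmic factor produced by the denominator asymptotics $Z_{\tau_t}\sim C\tau_t^{d/2}$. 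Your interpolation $e^{-f/\tau_t}\le e^{-(1-\alpha)\delta/\tau_t}e^{-\alpha f/\tau_t}$ instead reduces the numerator to a second partition function and invokes Laplace twice; the $\varepsilon$-loss is then carried by the parameter $\alpha>0$ (together with the $o(1)$ from $\tau_t\sim E/\ln t$). Your route is slightly more streamlined in that it avoids the explicit ball/complement decomposition and the separate tail estimate, at the cost of not isolating the sharp exponent $\delta/E$ before introducing slack. Both produce the same final bound and the same exponential-in-$d$ prefactor ($\alpha^{-d/2}$ versus $(\ln t)^{d/2}$), and your observation that $E>E_*$ is irrelevant here is correct.
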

\begin{proof}
Note that
\begin{equation}
\label{eq:firtermdef}
\mathbb{P}(f(\widetilde{X}_t)> \delta) = \frac{\int_{f(x) > \delta} \exp(-f(x)/\tau_t) dx}{\int_{\mathbb{R}^d} \exp(-f(x)/\tau_t) dx}.
\end{equation}
Under Assumption \ref{assump:reggrow}, $f$ has quadratic growth, so at least linear growth at infinity \cite[Lemma 3.14]{MS14}: there exists $C > 0$ such that for $R$ large enough,
\begin{equation*}
f(x) \ge \min_{|z| = R} f(z) + C(|x| - R) \quad \mbox{for } |x| > R.
\end{equation*}
We can also choose $R$ sufficiently large so that $\min_{|z| = R} f(z) > \delta$.
Consequently,
\begin{align}
\label{eq:firtermnum}
\int_{f(x) > \delta} \exp(-f(x)/\tau_t) dx & = \int_{f(x) > \delta, |x| \le R} \exp(-f(x)/\tau_t) dx + \int_{f(x) > \delta, |x| > R}\exp(-f(x)/\tau_t) dx \notag \\
& = e^{-\frac{\delta}{\tau_t}} \vol(B_R) (1 + \mathcal{O}( \tau_t)),
\end{align}
where $\vol(B_R)$ is the volume of a ball with radius $R$.
Further by Laplace's method,
\begin{equation}
\label{eq:firtermdenom}
\int_{\mathbb{R}^d} \exp(-f(x)/\tau_t) dx \sim C (\tau_t)^{\frac{d}{2}}.
\end{equation}
By injecting \eqref{eq:firtermnum}, \eqref{eq:firtermdenom} into \eqref{eq:firtermdef},we get
\begin{equation}
\mathbb{P}(f(\widetilde{X}_t)> \delta) \le C t^{-\frac{\delta}{E}} (\ln t)^{\frac{d}{2}},
\end{equation}
which clearly yields \eqref{eq:nuest} since $(\ln t)^{\frac{d}{2}}/t^{\frac{\varepsilon}{E}} \to 0$ as $t \to \infty$.
\end{proof}

\begin{remark}
\label{rk:Laplace}
It is interesting to get a bound for $\mathbb{P}(f(\widetilde{X}_t) > \delta)$ when the dimension $d$ is large.
As mentioned in the introduction, the Laplace bound \eqref{eq:firtermdenom} may fail when $d, t \to \infty$ simultaneously.
Recall that $m_0$ is the minimum point of $f$.
By continuity of $f$, there exists $r > 0$ such that $f(x) < \varepsilon$ when $|x - m_0| < r$.
Thus,
\begin{align}
\label{eq:firtermdenom2}
\int_{\mathbb{R}^d} \exp(-f(x)/\tau_t) dx & \ge \int_{|x - m_0| < r} \exp(-f(x)/\tau_t) dx \notag \\
& \ge e^{-\frac{\varepsilon}{\tau_t}} \vol(B_r).
\end{align}
Further, if $t/e^{\frac{Ed}{CR}} \to \infty$ as $d \to \infty$,
\begin{equation}
\label{eq:firtermnum2}
\int_{f(x) > \delta} \exp(-f(x)/\tau_t) dx = e^{-\frac{\delta}{\tau_t}} \vol(B_R) (1 + \mathcal{O}( \tau_t d)),
\end{equation}
Combining \eqref{eq:firtermdenom2} and \eqref{eq:firtermnum2}, we get
\begin{equation}
\label{eq:nuest22}
\mathbb{P}(f(\widetilde{X}_t) > \delta) \le C \gamma^d t^{-\frac{\delta - \varepsilon}{E}},
\end{equation}
where $C > 0$ depends on $\delta, \varepsilon, f, E$, and $\gamma = \max(R/r, e^{\frac{\delta - \varepsilon}{CR}})$.
Also note that \cite{RRT17} obtained the bound $\mathbb{E}f(\widetilde{X}_t) \le C d/\ln t$.
By Markov's inequality, we get
\begin{equation}
\label{eq:boundRRT}
\mathbb{P}(f(\widetilde{X}_t) > \delta) \le C\delta^{-1} d \, (\ln t)^{-1}.
\end{equation}
In comparison with \eqref{eq:boundRRT}, the bound \eqref{eq:nuest22} is better in `$t$' but worse in `$d$'.
In terms of relaxation time, i.e. letting $\mathbb{P}(f(\widetilde{X}_t) > \delta)$ be of constant order,
both estimates show an exponential dependence of $t$ on $d$.
This suggests that SA is exponentially slow as the dimension increases.
\end{remark}

{\bf Step 3: Differential inequality for $H(\mu_t|\nu_{\tau_t})$.}
To get an estimate of $H(\mu_t|\nu_{\tau_t})$, we need to consider the time derivative $\frac{d}{dt}H(\mu_t|\nu_{\tau_t})$.
The following lemma is a reformulation of \cite[Proposition 3]{Miclo92}.
For ease of reference, we give a simplified proof here. First let us convent some notation. For an absolutely continuous measure $\mu(dx)$, we abuse the notation $\mu(dx) = \mu(x) dx$, i.e. $\mu(x)$ is the density of $\mu(dx)$.
So for two such probability measures $\mu$ and $\nu$, the Radon-Nikodym derivative $\frac{d\mu}{d\nu}(x)$ is identified with $\frac{\mu(x)}{\nu(x)}$.
\begin{lemma}
\label{lem:DI}
Let $\tau_t$ be decreasing in $t$. We have
\begin{equation}
\label{eq:diffineq}
\frac{d}{dt}H(\mu_t|\nu_{\tau_t}) \le -2 \tau_t I\left(\mu_t|\nu_{\tau_t}\right) + \frac{d}{dt}\bigg(\frac{1}{\tau_t} \bigg)\, \mathbb{E}f(X_t).
\end{equation}
where $I(\mu_t|\nu_{\tau_t})$ is the Fisher information defined by \eqref{eq:FI}.
\end{lemma}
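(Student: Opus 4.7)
The plan is to differentiate $H(\mu_t|\nu_{\tau_t})=\int \mu_t\log(\mu_t/\nu_{\tau_t})\,dx$ in $t$, split the derivative into a ``spatial'' part governed by the Fokker--Planck equation \eqref{eq:FPSA} and a ``temperature'' part coming from the $t$-dependence of $\nu_{\tau_t}$, and then use integration by parts to recognize a Fisher information on the spatial piece.

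Write $h_t(x):=\mu_t(x)/\nu_{\tau_t}(x)$, so that $\log h_t=\log\mu_t+\log Z_{\tau_t}+f/\tau_t$. By the product rule,
\begin{equation*}
\frac{d}{dt}H(\mu_t|\nu_{\tau_t})=\int \partial_t\mu_t\,\log h_t\,dx+\int \mu_t\,\partial_t\log h_t\,dx.
\end{equation*}
For the second integral, note $\partial_t\log h_t=\mu_t^{-1}\partial_t\mu_t+\frac{d}{dt}\log Z_{\tau_t}+f\,\frac{d}{dt}(1/\tau_t)$; since $\int\partial_t\mu_t\,dx=0$, this contributes $\frac{d}{dt}\log Z_{\tau_t}+\frac{d}{dt}(1/\tau_t)\,\mathbb{E}f(X_t)$. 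A direct calculation gives $\frac{d}{dt}\log Z_{\tau_t}=-\frac{d}{dt}(1/\tau_t)\int f\,d\nu_{\tau_t}$, which is $\le 0$ because $\tau_t$ is decreasing (so $\frac{d}{dt}(1/\tau_t)\ge 0$) and $f\ge 0$ under the standing assumption $\min f=0$. Hence the second integral is bounded above by $\frac{d}{dt}(1/\tau_t)\,\mathbb{E}f(X_t)$.

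For the first integral, substitute the Fokker--Planck equation $\partial_t\mu_t=\nabla\cdot(\mu_t\nabla f)+\tau_t\Delta\mu_t$ and integrate by parts, assuming enough decay at infinity so that boundary terms vanish:
\begin{equation*}
\int\partial_t\mu_t\,\log h_t\,dx=-\int\bigl(\mu_t\nabla f+\tau_t\nabla\mu_t\bigr)\cdot\nabla\log h_t\,dx.
\end{equation*}
The key algebraic identity is that $\mu_t\nabla f+\tau_t\nabla\mu_t=\tau_t\mu_t\nabla\log h_t$, because $\nabla\log h_t=\nabla\log\mu_t+\nabla f/\tau_t$. Combined with the reformulation $I(\mu_t|\nu_{\tau_t})=\tfrac12\int\mu_t|\nabla\log h_t|^2\,dx$ of the Fisher information \eqref{eq:FI}, this gives $\int\partial_t\mu_t\,\log h_t\,dx=-2\tau_t\,I(\mu_t|\nu_{\tau_t})$.

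Adding the two pieces yields \eqref{eq:diffineq}. The only non-routine step is the algebraic identity that turns the drift-plus-diffusion combination into $\tau_t\mu_t\nabla\log h_t$, which is precisely what makes the overdamped Langevin structure play well with the relative entropy against its instantaneous Gibbs measure; the rest is bookkeeping and a sign check on $\frac{d}{dt}\log Z_{\tau_t}$.
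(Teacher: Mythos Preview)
Your proof is correct and follows essentially the same route as the paper's: the same split into a Fokker--Planck term and a temperature term, the same integration-by-parts computation yielding $-2\tau_t I(\mu_t|\nu_{\tau_t})$ (the paper first rewrites \eqref{eq:FPSA} as $\partial_t\mu_t=\nabla\cdot(\tau_t\nu_{\tau_t}\nabla h_t)$ before integrating by parts, while you integrate by parts first and then invoke the identity $\mu_t\nabla f+\tau_t\nabla\mu_t=\tau_t\mu_t\nabla\log h_t$, which is the same step in a different order), and the same sign argument on $\frac{d}{dt}\log Z_{\tau_t}$.
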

\begin{proof}
Observe that
\begin{align}
\label{eq:DE}
\frac{d}{dt}H(\mu_t|\nu_{\tau_t}) & = \frac{d}{dt} \int \mu_t \ln\bigg(\frac{\mu_t}{\nu_{\tau_t}} \bigg) dx \notag\\
& = \underbrace{\int \frac{\partial \mu_t}{\partial t} \ln\bigg(\frac{\mu_t}{\nu_{\tau_t}} \bigg) dx}_{(a)} +\underbrace{\int \mu_t \frac{\partial}{\partial t} \bigg(\ln\bigg(\frac{\mu_t}{\nu_{\tau_t}} \bigg)\bigg) dx}_{(b)}.
\end{align}
We first consider the term (a).
Recall that $\mu_t$ satisfies the Fokker-Planck equation \eqref{eq:FPSA}.
Together with the fact that $\nabla(\tau_t \nu_{\tau_t}) = - \nu_{\tau_t} \nabla f$, we have
\begin{equation}
\label{eq:FPSA2}
\frac{\partial \mu_t}{\partial t} = \nabla \cdot \bigg(\tau_t \nu_{\tau_t} \nabla\bigg(\frac{\mu_t}{\nu_{\tau_t}} \bigg) \bigg).
\end{equation}
By injecting \eqref{eq:FPSA2} into the term (a) and further by integration by parts, we get
\begin{align}
\label{eq:terma}
(a) & = \int \nabla \cdot \bigg(\tau_t \nu_{\tau_t} \nabla\bigg(\frac{\mu_t}{\nu_{\tau_t}} \bigg) \bigg) \ln\bigg(\frac{\mu_t}{\nu_{\tau_t}} \bigg) dx \notag\\
& = - \int \tau_t \nu_{\tau_t} \nabla\bigg(\frac{\mu_t}{\nu_{\tau_t}} \bigg) \cdot \nabla \ln\bigg(\frac{\mu_t}{\nu_{\tau_t}} \bigg) dx \notag\\
& = - \tau_t \int \bigg| \nabla\bigg(\frac{\mu_t}{\nu_{\tau_t}} \bigg)\bigg|^2 \bigg(\frac{\mu_t}{\nu_{\tau_t}} \bigg)^{-1} d \nu_{\tau_t}
= -2 \tau_t I(\mu_t| \nu_{\tau_t}).
\end{align}
Now we consider the term (b). Direct computation leads to
\begin{align}
\label{eq:termb}
(b) = \int \bigg(\frac{\partial \mu_t}{\partial t} - \frac{\mu_t}{\nu_{\tau_t}} \frac{\partial \nu_{\tau_t}}{\partial t} \bigg)dx &= - \int \frac{\partial}{\partial t} \left(\ln \nu_{\tau_t} \right) d\mu_t \notag \\
& = \int \frac{d}{dt}\left(\ln Z_{\tau_t}\right) d\mu_t + \frac{d}{dt}\bigg(\frac{1}{\tau_t} \bigg) \mathbb{E}f(X_t) \notag\\
& \le \frac{d}{dt}\bigg(\frac{1}{\tau_t} \bigg) \, \mathbb{E}f(X_t),
\end{align}
where we use the fact that $\int \mu_t dx = 1$ in the second equation, and that $\tau_t$ is decreasing in $t$ in the last inequality.
Combining \eqref{eq:DE} with \eqref{eq:terma} and \eqref{eq:termb} yields \eqref{eq:diffineq}.
\end{proof}

{\bf Step 4: Estimating $H(\mu_t|\nu_{\tau_t})$ via the Erying-Kramers law.}
Note that there are two terms on the right hand side of \eqref{eq:diffineq}.
We start with an estimate of the second term.
\begin{lemma}
\label{lem:expfX}
Let $f$ satisfy Assumption \ref{assump:reggrow}, and assume that the condition \eqref{eq:moment} for $\mu_0$ holds.
For each $\varepsilon > 0$, there exists $C > 0$ (depending on $\varepsilon, f$) such that
\begin{equation}
\label{eq:expfX}
\mathbb{E}f(X_t) \le C(1+t)^{\varepsilon}.
\end{equation}
\end{lemma}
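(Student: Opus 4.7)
The plan is to combine Itô's formula applied to a high power of $1+f$ with a Markov-type tail bound. For a positive integer $\beta \ge 1/\varepsilon$, set $V:=(1+f)^\beta$. A direct calculation gives
\[
\mathcal{L}_t V = \beta(1+f)^{\beta-2}\bigl\{-[(1+f)-\tau_t(\beta-1)]\,|\nabla f|^2 + \tau_t(1+f)\,\Delta f\bigr\},
\]
where $\mathcal{L}_t = -\nabla f\cdot\nabla + \tau_t\Delta$ is the time-dependent generator of $(X_t)$. My first target is the pointwise bound $\mathcal{L}_t V\le K_\beta$ valid uniformly in $x$ for all $t\ge t_0$, with $t_0$ chosen so that $\tau_t<1/\beta$.

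I would prove this bound by splitting $\mathbb{R}^d$ into a compact ball $\{|x|\le R\}$ and its complement. On the ball, smoothness of $f$ makes $f$, $|\nabla f|$ and $|\Delta f|$ bounded, so $|\mathcal{L}_t V|\le K_\beta$ trivially. On $\{|x|>R\}$ with $R$ sufficiently large, Assumption \ref{assump:reggrow}(ii) gives $\Delta f\le |\nabla f|^2-C|x|^2$; substituting this into $\mathcal{L}_t V$ reduces the brace to
\[
-\bigl[(1+f)(1-\tau_t)-\tau_t(\beta-1)\bigr]|\nabla f|^2-\tau_t C(1+f)|x|^2.
\]
Because $f\ge 0$, the coefficient of $|\nabla f|^2$ is at least $1-\tau_t\beta>0$ whenever $\tau_t<1/\beta$, so both summands are nonpositive and $\mathcal{L}_t V\le 0$ on the tail. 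Combining the two regions yields the uniform upper bound $\mathcal{L}_t V\le K_\beta$ for all $t\ge t_0$.

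With this pointwise bound in hand, the standard localize-and-take-expectation argument applied to Itô's formula gives $\frac{d}{dt}\mathbb{E}V(X_t)\le K_\beta$ for $t\ge t_0$. Integrating, and estimating $\mathbb{E}V(X_{t_0})$ via the moment hypothesis \eqref{eq:moment} together with routine SDE estimates on the short interval $[0,t_0]$, produces $\mathbb{E}(1+f(X_t))^\beta\le C_\beta(1+t)$. Markov's inequality then yields $\mathbb{P}(f(X_t)>s)\le C_\beta(1+t)(1+s)^{-\beta}$, and integrating the tail with a split at $s_0=(C_\beta(1+t))^{1/\beta}$ gives $\mathbb{E}f(X_t)\le C'_\beta(1+t)^{1/\beta}\le C(1+t)^\varepsilon$ once $\beta\ge 1/\varepsilon$.

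The hard part will be the sign analysis in the tail region: the chain-rule correction $\tau_t(\beta-1)|\nabla f|^2$ is positive and is only absorbed into the dissipation once $\tau_t$ falls below $1/\beta$, which forces $t_0$ to grow with $\beta=\lceil 1/\varepsilon\rceil$. Secondary technicalities, namely the moment estimate over $[0,t_0]$ (where $\tau_t$ is merely bounded) and the martingale localization needed to cancel the stochastic integral in expectation, are standard and carry no conceptual difficulty.
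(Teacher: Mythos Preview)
Your argument is correct and is precisely the Lyapunov-function method that the paper defers to by citing \cite[Lemma 2]{Miclo92}; the paper gives no self-contained proof, so you have effectively reconstructed the referenced argument. Two minor remarks. First, the final step is simpler than you make it: once $\mathbb{E}(1+f(X_t))^\beta\le C_\beta(1+t)$, Jensen's inequality gives directly $\mathbb{E}(1+f(X_t))\le\bigl(C_\beta(1+t)\bigr)^{1/\beta}$, so the Markov-plus-tail-integration detour is unnecessary. Second, your short-time step can in fact be avoided under the natural hypothesis $\sup_t\tau_t<1$ (reasonable for a cooling schedule): in that case, since $f$ has quadratic growth, one may enlarge $R$ so that $(1+f)(1-\tau_t)\ge\tau_0(\beta-1)$ on $\{|x|>R\}$, and the brace is nonpositive there for \emph{all} $t\ge0$, not just $t\ge t_0$. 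If instead $\tau_t\ge1$ for small $t$, the ``routine SDE estimates'' you invoke on $[0,t_0]$ are not entirely routine under Assumption~\ref{assump:reggrow} alone (which gives no upper bound on $|\nabla f|$), and some additional growth hypothesis is implicitly needed---but this is a gap in the paper's stated assumptions rather than in your strategy.
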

\begin{proof}
It is easy to see that Assumption \ref{assump:reggrow} implies Assumption $\mbox{H}_1$ in \cite{Miclo92}.
Together with the moment condition \eqref{eq:moment}, the proof follows the line of reasoning in \cite[Lemma 2]{Miclo92}.
\end{proof}

\quad Now we apply the Eyring-Kramers law, combining with a Gr\"{o}nwall-type argument to bound $H(\mu_t|\nu_{\tau_t})$ for large $t$.

\begin{lemma}
\label{lem:estH}
Let $f$ satisfy Assumption \ref{assump:reggrow} $\&$ \ref{assump:nondeg}, and assume that the condition \eqref{eq:moment} for $\mu_0$ holds.
Assume that $\tau_t \sim \frac{E}{\ln t}$ and $\frac{d}{dt}\left(\frac{1}{\tau_t}\right) = \mathcal{O}\left(\frac{1}{t}\right)$ as $t \to \infty$ with $E > E_{*}$.
For each $\varepsilon > 0$, there exists $C > 0$ (depending on $\varepsilon, f, E, d$) such that
\begin{equation}
\label{eq:estH}
H(\mu_t| \nu_{\tau_t}) \le Ct^{-\left(1 - \frac{E_{*}}{E} - \varepsilon \right)}.
\end{equation}
\end{lemma}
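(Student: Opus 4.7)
The plan is to close the differential inequality of Lemma \ref{lem:DI} using the Eyring--Kramers log-Sobolev estimate of Lemma \ref{fact:EK}, and then extract a polynomial decay by a Gr\"onwall-type argument tempered by the sublinear moment bound of Lemma \ref{lem:expfX}.

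First I would feed the log-Sobolev inequality $I(\mu_t|\nu_{\tau_t}) \ge \alpha_{\tau_t} H(\mu_t|\nu_{\tau_t})$ into Lemma \ref{lem:DI}. Writing $h(t) := H(\mu_t|\nu_{\tau_t})$, $a(t) := 2\tau_t\alpha_{\tau_t}$, and $b(t) := \frac{d}{dt}(1/\tau_t)\cdot \mathbb{E}f(X_t)$, this reduces the problem to the scalar inequality $h'(t) \le -a(t) h(t) + b(t)$. The hypothesis $\tau_t \sim E/\ln t$ combined with $\alpha_{\tau_t} \sim C\exp(-E_*/\tau_t)$ turns $\exp(-E_*/\tau_t)$ into $t^{-E_*/E}$, so $a(t) \sim Ct^{-E_*/E}/\ln t$. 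The hypothesis $\frac{d}{dt}(1/\tau_t) = \mathcal{O}(1/t)$ together with Lemma \ref{lem:expfX} gives $b(t) \le C_0 t^{-1+\varepsilon_1}$ for every $\varepsilon_1>0$ and $t$ large.

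Next I would apply Duhamel's formula with integrating factor $e^{A(t)}$, where $A(t) := \int_0^t a(u)\,du$, to obtain
\[
h(t) \le h(0) e^{-A(t)} + \int_0^t e^{-(A(t)-A(s))} b(s)\, ds.
\]
Since $E>E_*$ we have $\beta := 1 - E_*/E > 0$ and $A(t) \sim Ct^\beta/\ln t \to \infty$, so the free term decays super-polynomially. To bound the Duhamel integral I would split at $s = t/2$. On $[0,t/2]$ the prefactor $e^{-(A(t)-A(t/2))}$ contributes super-polynomial decay which dominates the polynomial growth of $\int_0^{t/2} b(s)\, ds$. On $[t/2,t]$, the monotonicity of $s \mapsto s^{-1+\varepsilon_1}$ gives $b(s) \le C t^{-1+\varepsilon_1}$, and the substitution $u = A(t)-A(s)$ together with the monotonicity of $a$ yields
\[
\int_{t/2}^t e^{-(A(t)-A(s))}\, ds \le \frac{1}{a(t)} \lesssim t^{E_*/E}\ln t.
\]
Putting the two estimates together gives a bound of order $t^{-(1-E_*/E)+\varepsilon_1}\ln t$ on the Duhamel integral; choosing $\varepsilon_1<\varepsilon$ absorbs the logarithmic factor and delivers \eqref{eq:estH}.

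The main obstacle is the final step. Because $a(t)$ is exponentially small in $1/\tau_t$ while $b(t)$ is only polynomially small, a naive Gr\"onwall bound using monotonicity of $b/a$ over the whole interval $[0,t]$ produces only a constant upper bound on $h(t)$, not a vanishing rate. The split at $s=t/2$, which effectively freezes the forcing at its value near $t$ on the part of the interval that actually contributes, is the key device that converts the inequality into the polynomial rate $t^{-(1-E_*/E)+\varepsilon}$.
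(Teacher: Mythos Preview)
Your proposal is correct and follows essentially the same strategy as the paper: combine Lemma \ref{lem:DI} with the Eyring--Kramers LSI estimate (Lemma \ref{fact:EK}) and the moment bound of Lemma \ref{lem:expfX} to obtain the scalar inequality $h'\le -a\,h+b$, then extract the polynomial rate. The only difference is in the final ODE step---the paper subtracts the ansatz $\frac{2C'}{C}t^{-(1-E_*/E-2\varepsilon)}$ and applies Gr\"onwall to the remainder $Q$, whereas you use Duhamel's formula and split the forcing integral at $t/2$; both devices are standard and yield the same rate.
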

\begin{proof}
Using Lemma \ref{fact:EK} and the bound \eqref{eq:diffineq}, we have
\begin{equation}
\label{eq:dH}
\frac{d}{dt} H(\mu_t|\nu_{\tau_t}) \le - 2 \tau_t \alpha_t H(\mu_t|\nu_{\tau_t}) + \frac{C}{t} \, \mathbb{E}f(X_t),
\end{equation}
where $\alpha_t$ is the LSI constant for the Gibbs measure $\nu_{\tau_t}$.
By the Eyring-Kramers formula \eqref{eq:EKformula}, for each $\varepsilon > 0$,
there exist $C > 0$ and $t_0 > 0$,
\begin{equation}
\label{eq:alphat}
2 \tau_t \alpha_t \ge C t^{-\left( \frac{E_{*}}{E} - \varepsilon \right)} \quad \mbox{for } t \ge t_0.
\end{equation}
Combining \eqref{eq:dH} with \eqref{eq:expfX}, \eqref{eq:alphat}, we get
\begin{equation}
\label{eq:diffEK}
\frac{d}{dt} H(\mu_t|\nu_{\tau_t}) \le -C t^{-\left( \frac{E_{*}}{E} - \varepsilon \right)} H(\mu_t|\nu_{\tau_t}) + C't^{-1 + \varepsilon}.
\end{equation}
Fix $\varepsilon \in (0, \frac{1}{2} - \frac{E_{*}}{2E})$, let
\begin{equation*}
Q(t): = H(\mu_t|\nu_{\tau_t}) - \frac{2C'}{C}t^{-1 + \frac{E_{*}}{E}+ 2 \varepsilon}.
\end{equation*}
Then for $t_0$ sufficiently large and $t \ge  t_0$,
we have $\frac{d}{dt} Q(t) \le -Ct^{-\frac{E_{*}}{E} + \varepsilon} Q(t)$ by \eqref{eq:diffEK}.
This implies that $Q(t) \le Q(t_0) e^{-C \int_{t_0}^t s^{-\frac{E_{*}}{E} + \varepsilon} ds}$.
Thus,
\begin{equation}
\label{eq:finalest}
H(\mu_t|\nu_{\tau_t}) \le \frac{2C'}{C}t^{-1+ \frac{E_{*}}{E} + 2 \varepsilon} + H(\mu_{t_0}|\nu_{t_0}) e^{-\frac{C}{\kappa}(t^{\kappa} - t_0^{\kappa})},
\end{equation}
where $\kappa: = 1 - \frac{E_{*}}{E} - \varepsilon > 0$.
Note that the first term on the right hand side of \eqref{eq:finalest} dominates, and the conclusion follows.
\end{proof}

\quad Finally, by injecting \eqref{eq:nuest}, \eqref{eq:estH} into \eqref{eq:Pbound}, we get the desired estimate \eqref{eq:main1}.

\section{Discrete simulated annealing}
\label{sc5}

\quad This section is devoted to the proof of Theorem \ref{thm:discreterate}.
The idea is close to that employed for the continuous-time SA process \eqref{eq:SA}.
However, the analysis is more complicated due to discretization, and additional tools from \cite{VW19} on the convergence of ULA are used.
Recall that $\eta_k$ is the step size at iteration $k$, and $\Theta_k:=\sum_{j \le k} \eta_j$ is the cumulative step size up to iteration $k$.
Let $\mu_k$ be the probability density of $x_k$ defined by \eqref{eq:discreteSA}, and
\begin{equation}
\label{eq:Gibbsdistime}
\nu_{\tau_{\scaleto{\Theta_k}{4 pt}}}(dx) = \frac{1}{Z_{\tau_{\scaleto{\Theta_k}{4 pt}}}} \exp\left(-\frac{f(x)}{\tau_{\scaleto{\Theta_k}{5 pt}}} \right) dx,
\end{equation}
where $Z_{\tau_{\scaleto{\Theta_k}{4 pt}}}: = \int_{\mathbb{R}^d} \exp(-f(x)/\tau_{\scaleto{\Theta_k}{5 pt}}) dx$ is the normalizing constant.
We divide the proof into four steps.

\medskip
{\bf Step 1: Reduce $\mu_k$ to $\nu_{\tau_{\scaleto{\Theta_k}{4 pt}}}$.}
This step is similar to Step $1$ $\&$ $2$ in Section \ref{sc4}.
Let $(\widetilde{x}_k; \, k \ge 0)$ be a sequence whose distribution is $\nu_{\tau_{\scaleto{\Theta_k}{4 pt}}}$ at epoch $k$, coupled with $(x_k; \, k \ge 0)$ on the same probability space.
Fix $\delta > 0$.
The same argument as in \eqref{eq:Pbound} shows that
\begin{equation}
\label{eq:Pbounddis}
\mathbb{P}(f(x_k) > \delta) \le \mathbb{P}(f(\widetilde{x}_k) > \delta) + \sqrt{2 H(\mu_k|\nu_{\tau_{\scaleto{\Theta_k}{4 pt}}})}.
\end{equation}
Assume that $\Theta_k \to \infty$ and $\tau_{\scaleto{\Theta_k}{5 pt}} \sim \frac{E}{\ln \Theta_k}$ as $k \to \infty$ with $E > E_{*}$.
By Lemma \ref{lem:firterm}, we get a bound for the first term on the right hand side of \eqref{eq:Pbounddis}.
That is, for each $\varepsilon \in (0, \delta)$, there exists $C>0$ (depending on $\varepsilon, \delta, f, E, d$) such that
\begin{equation}
\label{eq:nuestdiscrete}
\mathbb{P}(f(\widetilde{x}_k) > \delta) \le C \Theta_k^{-\frac{\delta - \varepsilon}{E}}.
\end{equation}
So it remains to estimate $H(\mu_k|\nu_{\tau_{\scaleto{\Theta_k}{4 pt}}})$, which is the task of the next three steps.

\medskip
{\bf Step 2: Continuous-time coupling.}
To make use of continuous-time tools, we couple the sequence $(x_k; \, k \ge 0)$ by a continuous-time process $(X_t; \, t \ge 0)$ such that
$(X_{\Theta_k}; \, k \ge 0)$ has the same distribution as $(x_k; \, k \ge 0)$.
To do this, define the process $X$ by
\begin{equation}
\label{eq:couple}
dX_t = -\nabla f(x_k) dt + \sqrt{2 \tau_{\scaleto{\Theta_k}{5 pt}}} dB_t, \quad t \in [\Theta_k, \Theta_{k+1}),
\end{equation}
where we identify $X_{\Theta_k}$ with $x_k$.
Note that the Fokker-Planck equation \eqref{eq:FPSA} plays an important role in the analysis of continuous-time SA \eqref{eq:SA}.
It is desirable to get a version of the Fokker-Planck equation for the coupled process \eqref{eq:couple}.
The result is stated as follows.
\begin{lemma}
For $t \in [\Theta_k, \Theta_{k+1})$, the probability density $\mu_t$ of $X_t$ defined by \eqref{eq:couple} satisfies the following equation:
\begin{equation}
\label{eq:FPcouple}
\frac{\partial \mu_t}{\partial t} = \nabla \cdot \bigg(\tau_{\scaleto{\Theta_k}{5 pt}} \nu_{\tau_{\scaleto{\Theta_k}{4 pt}}} \nabla \bigg(\frac{\mu_t}{\nu_{\tau_{\scaleto{\Theta_k}{4 pt}}}} \bigg) \bigg)
+ \nabla \cdot \left(\mu_t \, \mathbb{E}[\nabla f(x_k) - \nabla f(X_t)| X_t = x] \right).
\end{equation}
\end{lemma}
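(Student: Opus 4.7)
The plan is to read off the evolution of $\mu_t$ from Ito's formula applied to an arbitrary smooth compactly supported test function $\phi:\mathbb{R}^d \to \mathbb{R}$, and then rewrite the resulting ``effective'' drift in the form advertised in \eqref{eq:FPcouple}. The reason a naive Fokker-Planck derivation does not apply directly is that on the interval $[\Theta_k,\Theta_{k+1})$ the drift coefficient is $-\nabla f(x_k)$, which depends on the past value $X_{\Theta_k}=x_k$ rather than on the current state $X_t$; hence the driving vector field is not a deterministic function of $X_t$ alone.

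First I would apply Ito's formula to $\phi(X_t)$ on $[\Theta_k,\Theta_{k+1})$, take expectation to kill the martingale part, and obtain
\begin{equation*}
\frac{d}{dt}\mathbb{E}\phi(X_t) = -\mathbb{E}\bigl[\nabla \phi(X_t)\cdot \nabla f(x_k)\bigr] + \tau_{\scaleto{\Theta_k}{5 pt}}\,\mathbb{E}[\Delta \phi(X_t)].
\end{equation*}
The tower property then lets me condition on $X_t=x$:
\begin{equation*}
\mathbb{E}\bigl[\nabla\phi(X_t)\cdot\nabla f(x_k)\bigr] = \int \nabla\phi(x)\cdot \mathbb{E}[\nabla f(x_k)\mid X_t=x]\,\mu_t(x)\,dx,
\end{equation*}
and I would split $\mathbb{E}[\nabla f(x_k)\mid X_t=x] = \nabla f(x) + \mathbb{E}[\nabla f(x_k)-\nabla f(X_t)\mid X_t=x]$, using that $\nabla f(X_t)=\nabla f(x)$ on the conditioning event.

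Next I would integrate by parts against $\phi$ in each of the three resulting terms to pass from the weak formulation to a strong PDE for $\mu_t$, yielding
\begin{equation*}
\frac{\partial \mu_t}{\partial t} = \nabla\cdot(\mu_t \nabla f) + \tau_{\scaleto{\Theta_k}{5 pt}} \Delta \mu_t + \nabla\cdot\bigl(\mu_t\,\mathbb{E}[\nabla f(x_k)-\nabla f(X_t)\mid X_t = x]\bigr).
\end{equation*}
Finally, I would invoke the same algebraic identity already used in Step 3 of Section \ref{sc4}: since $\nabla \log \nu_{\tau_{\scaleto{\Theta_k}{4 pt}}} = -\nabla f/\tau_{\scaleto{\Theta_k}{5 pt}}$, a direct computation gives
\begin{equation*}
\nabla \cdot \Bigl(\tau_{\scaleto{\Theta_k}{5 pt}}\,\nu_{\tau_{\scaleto{\Theta_k}{4 pt}}} \nabla\!\Bigl(\tfrac{\mu_t}{\nu_{\tau_{\scaleto{\Theta_k}{4 pt}}}}\Bigr)\Bigr) = \tau_{\scaleto{\Theta_k}{5 pt}} \Delta \mu_t + \nabla\cdot(\mu_t \nabla f),
\end{equation*}
which collapses the first two terms into the required gradient-flow form and yields \eqref{eq:FPcouple}.

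The main obstacle I expect is the conditioning step: one has to argue that the regular conditional expectation $\mathbb{E}[\nabla f(x_k)\mid X_t=x]$ is well defined and sufficiently regular to justify the integration by parts, and that the densities $\mu_t$ are smooth enough on $[\Theta_k,\Theta_{k+1})$ for the strong PDE to hold. The Lipschitz bound on $\nabla f$ from Assumption \ref{assump:upper}, together with the Gaussian transition density of \eqref{eq:couple} on each subinterval, should supply the integrability and smoothness needed to pass freely between weak and strong formulations.
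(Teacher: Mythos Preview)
Your proposal is correct and follows essentially the same approach as the paper. Both arguments arrive at the intermediate PDE
\[
\frac{\partial \mu_t}{\partial t} = \nabla\cdot\bigl(\mu_t\,\mathbb{E}[\nabla f(x_k)\mid X_t=x]\bigr) + \tau_{\scaleto{\Theta_k}{5 pt}}\Delta\mu_t
\]
and then invoke the same algebraic identity to convert the first two terms into the gradient-flow form; the only minor difference is that you derive the intermediate PDE via It\^o's formula on a test function and the tower property, whereas the paper writes the conditional Fokker--Planck equation for $\mu_{t\mid\Theta_k}(\cdot\mid x_k)$ and integrates it against $\mu_{\Theta_k}$ using Bayes' rule.
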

\begin{proof}
Let $\mu_{t|s}(x|y)$ be the conditional probability $\mathbb{P}(X_t = x|X_s = y)$.
By conditioning on $X_{\Theta_k} = x_k$, we have
\begin{equation}
\label{eq:conditionFP}
\frac{\partial \mu_{t|\Theta_k}(x|x_k)}{\partial t} = \nabla \cdot (\mu_{t|\Theta_k}(x|x_k) \nabla f(x_k)) + \tau_{\Theta_k} \Delta \mu_{t|\Theta_k}(x|x_k).
\end{equation}
By integrating \eqref{eq:conditionFP} against $\mu_{\Theta_k}$, and using the fact that $\mu_{t|\Theta_k}(x|x_k) \mu_{\Theta_k}(x_k) = \mu_t(x) \mu_{\Theta_k|t}(x_k|x)$, we get
\begin{equation}
\label{eq:FPcouple2}
\frac{\partial \mu_t}{\partial t} = \nabla \cdot (\mu_t(x) \, \mathbb{E}[\nabla f(x_k)|X_t = x]) + \tau_{\scaleto{\Theta_k}{5 pt}} \Delta \mu_t.
\end{equation}
Further by \eqref{eq:FPSA2}, we have
\begin{equation}
\label{eq:FPSA3}
\nabla \cdot \bigg(\tau_{\scaleto{\Theta_k}{5 pt}} \nu_{\tau_{\scaleto{\Theta_k}{4 pt}}} \nabla \bigg(\frac{\mu_t}{\nu_{\tau_{\scaleto{\Theta_k}{4 pt}}}} \bigg) \bigg) = \nabla \cdot (\mu_t \nabla f(x)) + \tau_{\scaleto{\Theta_k}{5 pt}} \Delta \mu_t.
\end{equation}
Combining \eqref{eq:FPcouple2} and \eqref{eq:FPSA3} yields \eqref{eq:FPcouple}.
\end{proof}

\quad There are two terms on the right hand side of \eqref{eq:FPcouple}.
Comparing to \eqref{eq:FPSA2}, the first term is the usual Fokker-Planck term, while the second term corresponds to the discretization error.

\medskip
{\bf Step 3: One-step analysis of $H(\mu_k|\nu_{\tau_{\scaleto{\Theta_k}{4 pt}}})$.}
Here we use the coupled process \eqref{eq:couple} to study the one-step decay of $H(\mu_k|\nu_{\tau_{\scaleto{\Theta_k}{4 pt}}})$.

\begin{lemma}
\label{lem:onestep}
Let $f$ satisfy Assumption \ref{assump:reggrow}, \ref{assump:nondeg} $\&$ \ref{assump:upper}, and assume that
the condition \eqref{eq:moment} for $\mu_0$ holds.
Assume that $\tau_t \sim \frac{E}{\ln t}$ and $\frac{d}{dt}\left(\frac{1}{\tau_t}\right) = \mathcal{O}\left(\frac{1}{t}\right)$ as $t \to \infty$ with $E > E_{*}$.
Also assume that  $\Theta_k \to \infty$ and $\eta_{k+1}\Theta_k \to 0$ as $k \to \infty$.
Then, for each $\varepsilon > 0$, there exist $C, C' > 0$ (depending on $\varepsilon, f, E, d$) such that
\begin{align}
\label{eq:keyonestep}
H(\mu_{k+1}|\nu_{\tau_{\scaleto{\Theta_{k+1}}{4 pt}}})  & \le
 \bigg(1 - C\eta_{k+1}\Theta_k^{-(\frac{E_{*}}{E} - \varepsilon)}\bigg)H(\mu_{k}|\nu_{\tau_{\scaleto{\Theta_k}{4 pt}}}) \notag \\
&  \qquad \qquad \qquad \quad+ C'(\eta_{k+1}^2+ \eta_{k+1}^3 \ln \Theta_k +  \eta_{k+1} \Theta_k^{-1 + \varepsilon}).
\end{align}
\end{lemma}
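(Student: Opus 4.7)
The plan is to adapt the continuous-time entropy--dissipation argument of Lemmas \ref{lem:DI} and \ref{lem:estH} while tracking the extra discretization drift in the Fokker--Planck equation \eqref{eq:FPcouple}. First, I would freeze the reference measure on the interval $[\Theta_k,\Theta_{k+1}]$: treating $\nu_{\tau_{\Theta_k}}$ as time-independent there, differentiate $H(\mu_t\,|\,\nu_{\tau_{\Theta_k}})$ in $t$ and substitute \eqref{eq:FPcouple}. Integration by parts turns the first term on the right of \eqref{eq:FPcouple} into $-2\tau_{\Theta_k} I(\mu_t\,|\,\nu_{\tau_{\Theta_k}})$, exactly as in \eqref{eq:terma}. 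The second term contributes
\[
\int \mathbb{E}\!\left[\nabla f(X_t)-\nabla f(x_k)\,\big|\,X_t=x\right]\cdot \nabla\log\!\tfrac{\mu_t}{\nu_{\tau_{\Theta_k}}}\,\mu_t\,dx,
\]
which Young's inequality $a\cdot b\le \tfrac{1}{2\tau_{\Theta_k}}|a|^2+\tfrac{\tau_{\Theta_k}}{2}|b|^2$ splits into $\tau_{\Theta_k} I(\mu_t\,|\,\nu_{\tau_{\Theta_k}}) + \tfrac{1}{2\tau_{\Theta_k}} \mathbb{E}|\nabla f(X_t)-\nabla f(x_k)|^2$. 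The first half is absorbed by the dissipation, leaving net dissipation at rate $-\tau_{\Theta_k} I(\mu_t\,|\,\nu_{\tau_{\Theta_k}})$ and a pure error at rate $\tfrac{1}{2\tau_{\Theta_k}}\mathbb{E}|\nabla f(X_t)-\nabla f(x_k)|^2$.

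Next I would quantify this discretization error. For $t\in[\Theta_k,\Theta_{k+1}]$, the coupling \eqref{eq:couple} gives $X_t-x_k=-(t-\Theta_k)\nabla f(x_k)+\sqrt{2\tau_{\Theta_k}}(B_t-B_{\Theta_k})$, so Assumption \ref{assump:upper} yields
\[
\mathbb{E}|\nabla f(X_t)-\nabla f(x_k)|^2 \le L^2\bigl(\eta_{k+1}^2\,\mathbb{E}|\nabla f(x_k)|^2 + 2 d\,\tau_{\Theta_k}(t-\Theta_k)\bigr).
\]
Dividing by $\tau_{\Theta_k}\sim E/\ln\Theta_k$, then using the quadratic growth from Assumption \ref{assump:reggrow}(ii) together with the discrete analogue of Lemma \ref{lem:expfX} (so that $\mathbb{E}|\nabla f(x_k)|^2=\mathcal{O}(\Theta_k^\varepsilon)$), the pointwise error rate is $\mathcal{O}\bigl((t-\Theta_k)^2\ln\Theta_k + (t-\Theta_k)\bigr)$ after the $\Theta_k^\varepsilon$ factor is absorbed into $\varepsilon$. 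Integrating over the interval of length $\eta_{k+1}$ gives a total error contribution of $\mathcal{O}(\eta_{k+1}^2+\eta_{k+1}^3\ln\Theta_k)$.

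Third, I would invoke the Eyring--Kramers LSI (Lemma \ref{fact:EK}) to bound the remaining dissipation as $\tau_{\Theta_k} I(\mu_t\,|\,\nu_{\tau_{\Theta_k}})\ge C\,\Theta_k^{-(E_*/E-\varepsilon)}H(\mu_t\,|\,\nu_{\tau_{\Theta_k}})$. The resulting Gr\"{o}nwall-type inequality, integrated over $[\Theta_k,\Theta_{k+1}]$ and combined with $1-Cx\le e^{-Cx}$ (valid because $\eta_{k+1}\Theta_k\to 0$ forces the factor $C\eta_{k+1}\Theta_k^{-(E_*/E-\varepsilon)}\in(0,1)$), produces
\[
H(\mu_{\Theta_{k+1}}\,|\,\nu_{\tau_{\Theta_k}})\le \bigl(1-C\eta_{k+1}\Theta_k^{-(E_*/E-\varepsilon)}\bigr)H(\mu_k\,|\,\nu_{\tau_{\Theta_k}})+C'(\eta_{k+1}^2+\eta_{k+1}^3\ln\Theta_k).
\]

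Finally, I would swap the reference from $\nu_{\tau_{\Theta_k}}$ to $\nu_{\tau_{\Theta_{k+1}}}$, which is analogous to term (b) of Lemma \ref{lem:DI}: using $\frac{d}{dt}(1/\tau_t)=\mathcal{O}(1/t)$ together with the moment bound $\mathbb{E}f(x_k)=\mathcal{O}(\Theta_k^\varepsilon)$ produces the remaining $C'\eta_{k+1}\Theta_k^{-1+\varepsilon}$ term in \eqref{eq:keyonestep}. The principal obstacle is the delicate balancing of scales: the entropy dissipation decays only polynomially as $\Theta_k^{-E_*/E}$, while the discretization error carries $\ln\Theta_k$-factors from inverting $\tau_{\Theta_k}$ and the moment growth polluted by $\Theta_k^\varepsilon$ from Lemma \ref{lem:expfX}. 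Keeping the contraction factor strictly below one while ensuring the error terms remain summable is precisely what the step-size conditions \eqref{eq:dominate}--\eqref{eq:dominate2} are designed to guarantee when this one-step bound is iterated in the proof of Theorem \ref{thm:discreterate}.
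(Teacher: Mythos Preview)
Your overall architecture matches the paper's proof: decompose into $(a)$ the frozen-reference evolution on $[\Theta_k,\Theta_{k+1}]$ and $(b)$ the reference swap, differentiate the entropy using \eqref{eq:FPcouple}, split the discretization error via Young's inequality, apply the Eyring--Kramers LSI, and then Gr\"onwall. The one genuinely different choice is how you control $\mathbb{E}|\nabla f(x_k)|^2$. The paper does \emph{not} use an a priori moment bound here; instead it invokes Talagrand's inequality (Lemma~\ref{fact:Tala}) together with \cite[Lemma~10]{VW19} to obtain $\mathbb{E}|\nabla f(x_k)|^2 \le C\gamma_{\tau_{\Theta_k}}^{-1} H(\mu_k|\nu_{\tau_{\Theta_k}}) + C$, so that part of the discretization error becomes a term $C\eta_{k+1}^2\Theta_k^{E_*/E}\ln\Theta_k \cdot H(\mu_k|\nu_{\tau_{\Theta_k}})$ which is then absorbed into the contraction factor using $\eta_{k+1}\Theta_k\to 0$. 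Your route, treating $\mathbb{E}|\nabla f(x_k)|^2$ as an a priori quantity, is in fact simpler and also works: under Assumption~\ref{assump:upper} one has $|\nabla f|^2\le 2Lf$, and the paper itself proves $\sup_k\mathbb{E}f(x_k)<\infty$ (the contradiction argument around \eqref{eq:diffkk1}), so $\mathbb{E}|\nabla f(x_k)|^2=\mathcal{O}(1)$ rather than merely $\mathcal{O}(\Theta_k^\varepsilon)$. With that sharper input your error term is exactly $C'(\eta_{k+1}^2+\eta_{k+1}^3\ln\Theta_k)$ as stated; the phrase ``absorb $\Theta_k^\varepsilon$ into $\varepsilon$'' is not needed and, as written, would yield the slightly weaker $\eta_{k+1}^3\Theta_k^\varepsilon\ln\Theta_k$ which does not literally match \eqref{eq:keyonestep}.

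Two minor points of care. First, the inequality you cite, $1-Cx\le e^{-Cx}$, is true but goes the wrong way for converting the Gr\"onwall factor $e^{-Cx}$ into $1-C'x$; what you actually need is $e^{-Cx}\le 1-\tfrac{C}{2}x$ for $Cx\le 1$, which holds and costs only a constant. Second, you appeal to a ``discrete analogue of Lemma~\ref{lem:expfX}'' without proof; the paper supplies this independently via \eqref{eq:diffkk1}, so you should either reproduce that argument or cite it before using the moment bound in term $(a)$.
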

\begin{proof}
Write
\begin{equation}
\label{eq:decomp}
H(\mu_{k+1}|\nu_{\tau_{\scaleto{\Theta_{k+1}}{4 pt}}}) =  \underbrace{H(\mu_{k+1}|\nu_{\tau_{\scaleto{\Theta_{k}}{4 pt}}})}_{(a)} + \underbrace{(H(\mu_{k+1}|\nu_{\tau_{\scaleto{\Theta_{k+1}}{4 pt}}}) - H(\mu_{k+1}|\nu_{\tau_{\scaleto{\Theta_{k}}{4 pt}}}))}_{(b)}.
\end{equation}
We first use the coupled process \eqref{eq:couple} to study the term $(a)$.
Note that
\begin{align}
\label{eq:timedecomp}
\frac{d}{dt} H(\mu_t | \nu_{\tau_{\scaleto{\Theta_k}{4 pt}}}) & = \int \frac{\partial \mu_t}{\partial t} \ln \bigg(\frac{\mu_t}{\nu_{\tau_{\scaleto{\Theta_k}{4 pt}}}} \bigg)dx + \int \mu_t \frac{d}{dt}\ln \bigg(\frac{\mu_t}{\nu_{\tau_{\scaleto{\Theta_k}{4 pt}}}} \bigg) dx \notag\\
&  = \int  \nabla \cdot \bigg(\tau_{\scaleto{\Theta_k}{5 pt}} \nu_{\tau_{\scaleto{\Theta_k}{4 pt}}} \nabla \bigg(\frac{\mu_t}{\nu_{\tau_{\scaleto{\Theta_k}{4 pt}}}} \bigg) \bigg)  \ln \bigg(\frac{\mu_t}{\nu_{\tau_{\scaleto{\Theta_k}{4 pt}}}} \bigg)dx \notag \\
&\quad  + \underbrace{\int  \nabla \cdot \left(\mu_t \, \mathbb{E}[\nabla f(x_k) - \nabla f(X_t)| X_t = x] \right)\ln \bigg(\frac{\mu_t}{\nu_{\tau_{\scaleto{\Theta_k}{4 pt}}}} \bigg)dx}_{(c)} + \frac{d}{dt}\int \mu_t(dx) \notag \\
& = - 2 \tau_{\scaleto{\Theta_k}{5 pt}} I(\mu_t|\nu_{\tau_{\scaleto{\Theta_k}{4 pt}}}) + (c),
\end{align}
where we use \eqref{eq:FPcouple} in the second equation, and \eqref{eq:terma} in the third equation.
Now we need to estimate the term $(c)$ in \eqref{eq:timedecomp}.
By integration by parts and the fact that $a \cdot b \le \frac{1}{\tau_{\scaleto{\Theta_k}{4 pt}}} |a|^2 + \frac{\tau_{\scaleto{\Theta_k}{4 pt}}}{4} |b|^2$, we get
\begin{align}
\label{eq:discreterr}
(c) & = \mathbb{E}\bigg((\nabla f(X_t) -\nabla f(x_k)) \cdot \nabla \log \bigg(\frac{\mu_t}{\nu_{\tau_{\scaleto{\Theta_k}{4 pt}}}} \bigg) \bigg) \notag \\
& \le \frac{1}{\tau_{\scaleto{\Theta_k}{5 pt}}} \mathbb{E}|\nabla f(X_t) -\nabla f(x_k))|^2 + \frac{\tau_{\scaleto{\Theta_k}{5 pt}}}{4} \mathbb{E} \bigg| \log \bigg(\frac{\mu_t}{\nu_{\tau_{\scaleto{\Theta_k}{4 pt}}}} \bigg)\bigg|^2 \notag\\
& \le \frac{L^2}{\tau_{\scaleto{\Theta_k}{5 pt}}} \mathbb{E}|X_t - x_k|^2 + \frac{\tau_{\scaleto{\Theta_k}{5 pt}}}{2} I(\mu_t|\nu_{\tau_{\scaleto{\Theta_k}{4 pt}}}),
\end{align}
where $L$ is the Lipschitz constant of $\nabla f$ by Assumption \ref{assump:upper}.
Recall from \eqref{eq:couple} that $X_t - x_k = - \nabla f(x_k) (t - \Theta_k) + \sqrt{2 \tau_{\scaleto{\Theta_k}{5 pt}} (t - \Theta_k)} Z$, where $Z$ is standard normal.
Consequently,
\begin{align}
\label{eq:diffonestep}
\mathbb{E}|X_t - x_k|^2 &= (t - x_k)^2 \mathbb{E}|\nabla f(x_k)|^2 + 2 \tau_{\scaleto{\Theta_k}{5 pt}} (t - x_k) d \notag \\
& \le \eta_{k+1}^2 \mathbb{E}|\nabla f(x_k)|^2 + C \tau_{\scaleto{\Theta_k}{5 pt}} \eta_{k+1}.
\end{align}
According to Lemma \ref{fact:Tala}, $\nu_{\tau_{\scaleto{\Theta_k}{4 pt}}}$ satisfies Talagrand's inequality with constant $\gamma_{\tau_{\scaleto{\Theta_k}{4 pt}}} \sim \kappa \exp(-E_{*}/\tau_{\scaleto{\Theta_k}{5 pt}})$.
Further by \cite[Lemma 10]{VW19},
\begin{equation}
\label{eq:gradest}
\mathbb{E}|\nabla f(x_k)|^2 \le  \frac{C}{\gamma_{\tau_{\scaleto{\Theta_k}{4 pt}}}}H(\mu_k|\nu_{\tau_{\scaleto{\Theta_k}{4 pt}}}) + C.
\end{equation}
Combining \eqref{eq:discreterr} with \eqref{eq:diffonestep}, \eqref{eq:gradest} and the fact that $\tau_{\scaleto{\Theta_k}{5 pt}} \sim \frac{E}{\ln \Theta_k}$ as $k \to \infty$, we have
\begin{equation}
\label{eq:cest}
(c) \le C \bigg(\eta_{k+1}^2 \Theta_k^{\frac{E_{*}}{E}} \ln \Theta_k \bigg)  H(\mu_k|\nu_{\tau_{\scaleto{\Theta_k}{4 pt}}}) + C  (\eta_{k+1}+ \eta_{k+1}^2 \ln \Theta_k) +  \frac{\tau_{\scaleto{\Theta_k}{5 pt}}}{2} I(\mu_t|\nu_{\tau_{\scaleto{\Theta_k}{4 pt}}}).
\end{equation}
Injecting \eqref{eq:cest} into \eqref{eq:timedecomp} and further by Lemma \ref{fact:EK}, we get
\begin{align*}
\frac{d}{dt} H(\mu_t |\nu_{\tau_{\scaleto{\Theta_k}{4 pt}}}) &\le -\frac{3}{2}\tau_{\scaleto{\Theta_k}{5 pt}} I(\mu_t|\nu_{\tau_{\scaleto{\Theta_k}{4 pt}}}) + C' \bigg(\eta_{k+1}^2 \Theta_k^{\frac{E_{*}}{E}} \ln \Theta_k \bigg) H(\mu_k|\nu_{\tau_{\scaleto{\Theta_k}{4 pt}}}) + C' ( \eta_{k+1}+ \eta_{k+1}^2 \ln \Theta_k) \notag \\
& \le -\frac{3}{2} C \Theta_k^{-(\frac{E_{*}}{E} - \varepsilon)}H(\mu_t|\nu_{\tau_{\scaleto{\Theta_k}{4 pt}}}) + C' \bigg(\eta_{k+1}^2 \Theta_k^{\frac{E_{*}}{E}+ \varepsilon} H(\mu_k|\nu_{\tau_{\scaleto{\Theta_k}{4 pt}}}) + (\eta_{k+1}+ \eta_{k+1}^2 \ln \Theta_k) \bigg),
\end{align*}
Now by a Gr\"{o}nwall argument, we have
\begin{align}
\label{eq:decompfirst}
H(\mu_{k+1}|\nu_{\tau_{\scaleto{\Theta_k}{4 pt}}}) & \le e^{- \frac{3}{2}C \eta_{k+1} \Theta_k^{-(\frac{E_{*}}{E} - \varepsilon)}}\left( (1 + C' \eta_{k+1}^3 \Theta^{\frac{E_{*}}{E} + \varepsilon}) H(\mu_{k}|\nu_{\tau_{\scaleto{\Theta_k}{4 pt}}}) + C'(\eta_{k+1}^2+ \eta_{k+1}^3 \ln \Theta_k) \right) \notag \\
& \le e^{-\frac{5}{4}C \eta_{k+1}\Theta_k^{-(\frac{E_{*}}{E} - \varepsilon)}}H(\mu_{k}|\nu_{\tau_{\scaleto{\Theta_k}{4 pt}}})
+ C'(\eta_{k+1}^2+ \eta_{k+1}^3 \ln \Theta_k) \notag \\
& \le \bigg(1 - C\eta_{k+1}\Theta_k^{-(\frac{E_{*}}{E} - \varepsilon)}\bigg)H(\mu_{k}|\nu_{\tau_{\scaleto{\Theta_k}{4 pt}}})
+ C'(\eta_{k+1}^2+ \eta_{k+1}^3 \ln \Theta_k),
\end{align}
where we use the fact that $\eta_{k+1} \Theta_k^{\frac{E_{*}}{E}} \to 0$ as $k \to \infty$ in the second inequality.

\quad Now we consider the term $(b)$ in \eqref{eq:decomp}.
Note that
\begin{align}
\label{eq:diffHkk1}
H(\mu_{k+1}|\nu_{\tau_{\scaleto{\Theta_{k+1}}{4 pt}}}) - H(\mu_{k+1}|\nu_{\tau_{\scaleto{\Theta_{k}}{4 pt}}}) &= \ln \bigg(\frac{Z_{\tau_{\scaleto{\Theta_{k+1}}{4 pt}}}}{Z_{\tau_{\scaleto{\Theta_k}{4 pt}}}} \bigg) + \bigg( \frac{1}{\tau_{\scaleto{\Theta_{k+1}}{5 pt}}} - \frac{1}{\tau_{\scaleto{\Theta_k}{4 pt}}}\bigg) \mathbb{E}f(x_{k+1}) \notag \\
& \le C \frac{\eta_{k+1}}{\Theta_k} \mathbb{E}f(x_{k+1}).
\end{align}
We claim that for each $\varepsilon>0$, 
$\mathbb{E}f(x_{k+1}) \le C \le C \Theta_k^{\varepsilon}$.
Choose $C > 0$ sufficiently large, and let $\mathbb{E}f(x_{k+1})$ be the first term exceeding $C$.
By Assumption \ref{assump:upper},
\begin{align*}
f(x_{k+1}) \le f(x_k) - \eta_k |\nabla f(x_k)|^2 +\sqrt{2 \tau_{\scaleto{\Theta_k}{5 pt}}\eta_k} \nabla f(x_k) \cdot Z_k
+ \frac{L}{2} |\eta_k \nabla f(x_k) +\sqrt{2 \tau_{\scaleto{\Theta_k}{5 pt}}\eta_k} Z_k |^2
\end{align*}
Further by taking expectation, we get
\begin{equation}
\label{eq:diffkk1}
\mathbb{E}f(x_{k+1}) - \mathbb{E}f(x_k) \le - \eta_k \bigg(1 - \frac{\eta_k L}{2} \bigg) \mathbb{E}|\nabla f(x_k)|^2 + Ld \tau_{\scaleto{\Theta_k}{5 pt}}\eta_k.
\end{equation}
Thus, $\mathbb{E}f(x_{k+1}) - \mathbb{E}f(x_k) \le Ld \tau_{\scaleto{\Theta_k}{5 pt}}\eta_k$ which implies that $\mathbb{E}f(x_k) > C-1$ for $k$ large enough.
By Assumption \ref{assump:reggrow}(ii),
$\mathbb{E}|\nabla f(x_k)|^2 > C'$ for some $C' > 0$.
Combining with \eqref{eq:diffkk1}, we have $\mathbb{E}f(x_k) > \mathbb{E}f(x_{k+1}) \ge C$ for $k$ large enough.
This contradicts the fact that $\mathbb{E}f(x_{k+1})$ is the first term exceeding $C$.
Now by \eqref{eq:diffHkk1}, we get
\begin{equation}
\label{eq:decompsecond}
H(\mu_{k+1}|\nu_{\tau_{\scaleto{\Theta_{k+1}}{4 pt}}}) - H(\mu_{k+1}|\nu_{\tau_{\scaleto{\Theta_{k}}{4 pt}}}) \le C \eta_{k+1} \Theta_k^{-1 + \varepsilon}.
\end{equation}
Combining \eqref{eq:decomp} with \eqref{eq:decompfirst}, \eqref{eq:decompsecond} yields \eqref{eq:keyonestep}.
\end{proof}

{\bf Step 4: Estimating $H(\mu_k|\nu_{\tau_{\scaleto{\Theta_k}{4 pt}}})$.}
We use Lemma \ref{lem:onestep} to derive an estimate for $H(\mu_k|\nu_{\tau_{\scaleto{\Theta_k}{4 pt}}})$.
Under the condition \eqref{eq:dominate}, the term $\eta_{k+1} \Theta_k^{-1 + \varepsilon}$ dominates
$\eta_{k+1}^2$, $\eta_{k+1}^3 \ln \Theta_k$ as $k \to \infty$.
Thus, the recursion \eqref{eq:keyonestep} yields
\begin{equation*}
H(\mu_{k+1}|\nu_{\tau_{\scaleto{\Theta_{k+1}}{4 pt}}}) \le
 \bigg(1 - C\eta_{k+1}\Theta_k^{-(\frac{E_{*}}{E} - \varepsilon)}\bigg)H(\mu_{k}|\nu_{\tau_{\scaleto{\Theta_k}{4 pt}}}) + C' \eta_{k+1} \Theta_k^{-1 + \varepsilon}.
\end{equation*}
Since $E_{*}/E < 1$, a similar argument as in Lemma \ref{lem:estH} shows that
\begin{equation*}
H(\mu_{k+1}|\nu_{\tau_{\scaleto{\Theta_{k+1}}{4 pt}}}) - C \Theta_k^{-(1 - \frac{E_{*}}{E} - \varepsilon)} \le \bigg(1 - C' \eta_{k+1}\Theta_k^{-(\frac{E_{*}}{E} - \varepsilon)}\bigg) \bigg(H(\mu_{k}|\nu_{\tau_{\scaleto{\Theta_k}{4 pt}}}) - C \Theta_{k-1}^{-(1 - \frac{E_{*}}{E} - \varepsilon)}\bigg).
\end{equation*}
This together with the condition \eqref{eq:dominate2} implies that
\begin{equation}
\label{eq:estHdis}
H(\mu_{k}|\nu_{\tau_{\scaleto{\Theta_k}{4 pt}}})  \le C\Theta_{k}^{-(1 - \frac{E_{*}}{E} - \varepsilon)}.
\end{equation}
By injecting \eqref{eq:nuestdiscrete} and \eqref{eq:estHdis} into \eqref{eq:Pbounddis} we obtain \eqref{eq:main2}.

\section{Conclusion}
\label{sc6}

\quad In this paper, we study the convergence rate of SA in both continuous and discrete settings.
The main tool is functional inequalities for the Gibbs measure at low temperatures.
We prove that the tail probability, in both settings, exhibits a polynomial decay in time.
The decay rate is also given as function of the model parameters.
In the discrete setting, we derive a condition on the step size to ensure the convergence to the global minimum.
This condition may be useful in tuning the step size.

\quad There are  a few directions to extend this work.
For instance, one can study the convergence rate of SA for L\'evy flight with a suitable cooling schedule.
Another problem is to study the dependence of the convergence rate in the dimension $d$.
This requires a deep understanding of the Eyring-Kramers law in high dimension, and is related to the Laplace approximation of high dimensional integrals.
Both problems are worth exploring, but may be challenging.

\bigskip
{\bf Acknowledgement:}
Tang thanks Georg Menz for helpful discussions.
He also gratefully acknowledges financial support through a start-up grant at Columbia University. Zhou gratefully acknowledges financial supports through a start-up grant at Columbia University and through the Nie Center for Intelligent Asset Management.

\bibliographystyle{abbrvnat}
\bibliography{unique}

\begin{thebibliography}{50}
\providecommand{\natexlab}[1]{#1}
\providecommand{\url}[1]{\texttt{#1}}
\expandafter\ifx\csname urlstyle\endcsname\relax
  \providecommand{\doi}[1]{doi: #1}\else
  \providecommand{\doi}{doi: \begingroup \urlstyle{rm}\Url}\fi

\bibitem[Bakry et~al.(2014)Bakry, Gentil, and Ledoux]{BGL14}
D.~Bakry, I.~Gentil, and M.~Ledoux.
\newblock \emph{Analysis and geometry of {M}arkov diffusion operators}, volume
  348 of \emph{Grundlehren der Mathematischen Wissenschaften}.
\newblock Springer, 2014.

\bibitem[Bovier et~al.(2004)Bovier, Eckhoff, Gayrard, and Klein]{Bovier04}
A.~Bovier, M.~Eckhoff, V.~Gayrard, and M.~Klein.
\newblock Metastability in reversible diffusion processes. {I}. {S}harp
  asymptotics for capacities and exit times.
\newblock \emph{J. Eur. Math. Soc.}, 6\penalty0 (4):\penalty0 399--424, 2004.

\bibitem[Bovier et~al.(2005)Bovier, Gayrard, and Klein]{Bovier05}
A.~Bovier, V.~Gayrard, and M.~Klein.
\newblock Metastability in reversible diffusion processes. {II}. {P}recise
  asymptotics for small eigenvalues.
\newblock \emph{J. Eur. Math. Soc.}, 7\penalty0 (1):\penalty0 69--99, 2005.

\bibitem[Carrillo et~al.(2006)Carrillo, McCann, and Villani]{CMV06}
J.~A. Carrillo, R.~J. McCann, and C.~Villani.
\newblock Contractions in the 2-{W}asserstein length space and thermalization
  of granular media.
\newblock \emph{Arch. Ration. Mech. Anal.}, 179\penalty0 (2):\penalty0
  217--263, 2006.

\bibitem[Cerny(1985)]{Cer85}
V.~Cerny.
\newblock Thermodynamical approach to the traveling salesman problem: an
  efficient simulation algorithm.
\newblock \emph{J. Optim. Theory Appl.}, 45\penalty0 (1):\penalty0 41--51,
  1985.

\bibitem[Chen et~al.(2020)Chen, Du, and Tong]{CDT20}
X.~Chen, S.~S. Du, and X.~T. Tong.
\newblock On stationary-point hitting time and ergodicity of stochastic
  gradient {L}angevin dynamics.
\newblock \emph{J. Mach. Learn. Res.}, 21:\penalty0 Paper No. 68, 41, 2020.

\bibitem[Cherny and Engelbert(2005)]{CE05}
A.~S. Cherny and H.-J. Engelbert.
\newblock \emph{Singular stochastic differential equations}, volume 1858 of
  \emph{Lecture Notes in Mathematics}.
\newblock Springer-Verlag, 2005.

\bibitem[Chiang et~al.(1987)Chiang, Hwang, and Sheu]{CH87}
T.-S. Chiang, C.-R. Hwang, and S.~J. Sheu.
\newblock Diffusion for global optimization in {${\bf R}^n$}.
\newblock \emph{SIAM J. Control Optim.}, 25\penalty0 (3):\penalty0 737--753,
  1987.

\bibitem[Delahaye et~al.(2019)Delahaye, Chaimatanan, and Mongeau]{DCM19}
D.~Delahaye, S.~Chaimatanan, and M.~Mongeau.
\newblock Simulated annealing: from basics to applications.
\newblock In \emph{Handbook of metaheuristics}, volume 272 of \emph{Internat.
  Ser. Oper. Res. Management Sci.}, pages 1--35. Springer, 2019.

\bibitem[Durmus and Moulines(2017)]{DM17}
A.~Durmus and E.~Moulines.
\newblock Nonasymptotic convergence analysis for the unadjusted {L}angevin
  algorithm.
\newblock \emph{Ann. Appl. Probab.}, 27\penalty0 (3):\penalty0 1551--1587,
  2017.

\bibitem[Ethier and Kurtz(1986)]{EK86}
S.~N. Ethier and T.~G. Kurtz.
\newblock \emph{Markov processes: {C}haracterization and {C}onvergence}.
\newblock Wiley Series in Probability and Mathematical Statistics. John Wiley
  \& Sons, Inc., 1986.

\bibitem[Eyring(1935)]{Eyring35}
H.~Eyring.
\newblock The activated complex in chemical reactions.
\newblock \emph{J. Chem. Phys.}, 3\penalty0 (2):\penalty0 107--115, 1935.

\bibitem[Fang et~al.(1997)Fang, Qian, and Gong]{FQG97}
H.~Fang, M.~Qian, and G.~Gong.
\newblock An improved annealing method and its large-time behavior.
\newblock \emph{Stochastic Process. Appl.}, 71\penalty0 (1):\penalty0 55--74,
  1997.

\bibitem[Fournier and Tardif(2020)]{FT20}
N.~Fournier and C.~Tardif.
\newblock On the simulated annealing in $\mathbb{R}^d$.
\newblock 2020.
\newblock arXiv:2003.06360.

\bibitem[Gao et~al.(2020)Gao, Xu, and Zhou]{GXZ20}
X.~Gao, Z.~Q. Xu, and X.~Y. Zhou.
\newblock State-dependent temperature control for {L}angevin diffusions.
\newblock 2020.
\newblock arXiv:2005.04507.

\bibitem[Ge et~al.(2015)Ge, Huang, Jin, and Yuan]{GHJY15}
R.~Ge, F.~Huang, C.~Jin, and Y.~Yuan.
\newblock Escaping from saddle points -- online stochastic gradient for tensor
  decomposition.
\newblock In \emph{COLT}, pages 797--842, 2015.

\bibitem[Geman and Hwang(1986)]{GH86}
S.~Geman and C.-R. Hwang.
\newblock Diffusions for global optimization.
\newblock \emph{SIAM J. Control Optim.}, 24\penalty0 (5):\penalty0 1031--1043,
  1986.

\bibitem[Grenander and Miller(1994)]{GM94}
U.~Grenander and M.~I. Miller.
\newblock Representations of knowledge in complex systems.
\newblock \emph{J. Roy. Statist. Soc. Ser. B}, 56\penalty0 (4):\penalty0
  549--603, 1994.

\bibitem[Guo et~al.(2020)Guo, Han, and Tang]{GHT20}
X.~Guo, J.~Han, and W.~Tang.
\newblock Perturbed gradient descent with occupation time.
\newblock 2020.
\newblock arXiv:2005.04507.

\bibitem[Holley et~al.(1989)Holley, Kusuoka, and Stroock]{HKS89}
R.~A. Holley, S.~Kusuoka, and D.~W. Stroock.
\newblock Asymptotics of the spectral gap with applications to the theory of
  simulated annealing.
\newblock \emph{J. Funct. Anal.}, 83\penalty0 (2):\penalty0 333--347, 1989.

\bibitem[Jain and Kar(2017)]{JK17}
P.~Jain and P.~Kar.
\newblock Non-convex optimization for machine learning.
\newblock \emph{Found. Trends Mach. Learn.}, 10:\penalty0 142--336, 2017.

\bibitem[Jin et~al.(2017)Jin, Ge, Netrapalli, Kakade, and Jordan]{JG17}
C.~Jin, R.~Ge, P.~Netrapalli, S.~Kakade, and M.~I. Jordan.
\newblock How to escape saddle points efficiently.
\newblock In \emph{ICML}, pages 1724--1732, 2017.

\bibitem[Kirkpatrick et~al.(1983)Kirkpatrick, Gelatt, and Vecchi]{KGV83}
S.~Kirkpatrick, J.~Gelatt, and M.~Vecchi.
\newblock Optimization by simulated annealing.
\newblock \emph{Science}, 220\penalty0 (4598):\penalty0 671--680, 1983.

\bibitem[Koulamas et~al.(1994)Koulamas, Antony, and Jaen]{KAJ94}
C.~Koulamas, S.~Antony, and R.~Jaen.
\newblock A survey of simulated annealing applications to operations research
  problems.
\newblock \emph{Omega}, 22\penalty0 (1):\penalty0 41--56, 1994.

\bibitem[Kramers(1940)]{Kramers40}
H.~A. Kramers.
\newblock Brownian motion in a field of force and the diffusion model of
  chemical reactions.
\newblock \emph{Physica}, 7\penalty0 (4):\penalty0 284--304, 1940.

\bibitem[Ma et~al.(2019{\natexlab{a}})Ma, Chatterji, Cheng, Flammarion,
  Bartlett, and Jordan]{MCC19}
Y.~Ma, N.~Chatterji, X.~Cheng, N.~Flammarion, P.~Bartlett, and M.~I. Jordan.
\newblock Is there an analog of {N}esterov acceleration for {M}{C}{M}{C}?
\newblock \emph{arXiv:1902.00996}, 2019{\natexlab{a}}.

\bibitem[Ma et~al.(2019{\natexlab{b}})Ma, Chen, Jin, Flammarion, and
  Jordan]{MCJ19}
Y.~Ma, Y.~Chen, C.~Jin, N.~Flammarion, and M.~I. Jordan.
\newblock Sampling can be faster than optimization.
\newblock \emph{Proc. Natl. Acad. Sci. USA}, 116\penalty0 (42):\penalty0
  20881--20885, 2019{\natexlab{b}}.

\bibitem[M\'{a}rquez(1997)]{Mar97}
D.~M\'{a}rquez.
\newblock Convergence rates for annealing diffusion processes.
\newblock \emph{Ann. Appl. Probab.}, 7\penalty0 (4):\penalty0 1118--1139, 1997.

\bibitem[Menz and Schlichting(2014)]{MS14}
G.~Menz and A.~Schlichting.
\newblock Poincar\'{e} and logarithmic {S}obolev inequalities by decomposition
  of the energy landscape.
\newblock \emph{Ann. Probab.}, 42\penalty0 (5):\penalty0 1809--1884, 2014.

\bibitem[Menz et~al.(2018)Menz, Schlichting, Tang, and Wu]{MSTW18}
G.~Menz, A.~Schlichting, W.~Tang, and T.~Wu.
\newblock Ergodicity of the infinite swapping algorithm at low temperature.
\newblock 2018.
\newblock arXiv:1811.10174.

\bibitem[Meyn and Tweedie(1993)]{MT933}
S.~P. Meyn and R.~L. Tweedie.
\newblock Stability of {M}arkovian processes. {III}. {F}oster-{L}yapunov
  criteria for continuous-time processes.
\newblock \emph{Adv. in Appl. Probab.}, 25\penalty0 (3):\penalty0 518--548,
  1993.

\bibitem[Miclo(1992)]{Miclo92}
L.~Miclo.
\newblock Recuit simul{\'e} sur $\mathbb{R}^n$. \'{E}tude de l'{\'e}volution de
  l'{\'e}nergie libre.
\newblock \emph{Annales de l'Institut Henri Poincar{\'e}}, 28\penalty0
  (2):\penalty0 235--266, 1992.

\bibitem[Miclo(1995)]{Miclo95}
L.~Miclo.
\newblock Une \'{e}tude des algorithmes de recuit simul\'{e} sous-admissibles.
\newblock \emph{Ann. Fac. Sci. Toulouse Math. (6)}, 4\penalty0 (4):\penalty0
  819--877, 1995.

\bibitem[Monmarch{\'e}(2018)]{M18}
P.~Monmarch{\'e}.
\newblock Hypocoercivity in metastable settings and kinetic simulated
  annealing.
\newblock \emph{Probability Theory and Related Fields}, pages 1--34, 2018.

\bibitem[Otto and Villani(2000)]{OV00}
F.~Otto and C.~Villani.
\newblock Generalization of an inequality by {T}alagrand and links with the
  logarithmic {S}obolev inequality.
\newblock \emph{J. Funct. Anal.}, 173\penalty0 (2):\penalty0 361--400, 2000.

\bibitem[Parisi(1981)]{Parisi81}
G.~Parisi.
\newblock Correlation functions and computer simulations.
\newblock \emph{Nuclear Phys. B}, 180\penalty0 (3, FS 2):\penalty0 378--384,
  1981.

\bibitem[Pavlyukevich(2007)]{Pav07}
I.~Pavlyukevich.
\newblock L\'{e}vy flights, non-local search and simulated annealing.
\newblock \emph{J. Comput. Phys.}, 226\penalty0 (2):\penalty0 1830--1844, 2007.

\bibitem[Raginsky et~al.(2017)Raginsky, Rakhlin, and Telgarsky]{RRT17}
M.~Raginsky, A.~Rakhlin, and M.~Telgarsky.
\newblock Non-convex learning via stochastic gradient {L}angevin dynamics: a
  nonasymptotic analysis.
\newblock In \emph{COLT}, pages 1674--1703, 2017.

\bibitem[Roberts and Tweedie(1996)]{RT96}
G.~O. Roberts and R.~L. Tweedie.
\newblock Exponential convergence of {L}angevin distributions and their
  discrete approximations.
\newblock \emph{Bernoulli}, 2\penalty0 (4):\penalty0 341--363, 1996.

\bibitem[Rogers and Williams(1987)]{RW87}
L.~C.~G. Rogers and D.~Williams.
\newblock \emph{Diffusions, {M}arkov processes, and martingales. {V}ol. 2,
  {I}t\^{o} {C}alculus}.
\newblock Wiley Series in Probability and Mathematical Statistics. John Wiley
  \& Sons, Inc., 1987.

\bibitem[Royer(2007)]{Royer07}
G.~Royer.
\newblock \emph{An initiation to logarithmic {S}obolev inequalities}, volume~14
  of \emph{SMF/AMS Texts and Monographs}.
\newblock American Mathematical Society, 2007.

\bibitem[Schlichting(2012)]{Sch12}
A.~Schlichting.
\newblock \emph{The {E}yring-{K}ramers formula for {P}oincar{\'e} and
  logarithmic {S}obolev inequalities}.
\newblock PhD thesis, Universit{\"{a}}t Leipzig, 2012.
\newblock Available at
  \url{http://nbn-resolving.de/urn:nbn:de:bsz:15-qucosa-97965}.

\bibitem[Shun and McCullagh(1995)]{SM95}
Z.~Shun and P.~McCullagh.
\newblock Laplace approximation of high-dimensional integrals.
\newblock \emph{J. Roy. Statist. Soc. Ser. B}, 57\penalty0 (4):\penalty0
  749--760, 1995.

\bibitem[Stroock and Varadhan(1979)]{SV79}
D.~W. Stroock and S.~R.~S. Varadhan.
\newblock \emph{Multidimensional diffusion processes}, volume 233 of
  \emph{Grundlehren der Mathematischen Wissenschaften}.
\newblock Springer-Verlag, 1979.

\bibitem[Tang(2019)]{Tang19}
W.~Tang.
\newblock Exponential ergodicity and convergence for generalized reflected
  {B}rownian motion.
\newblock \emph{Queueing Syst.}, 92\penalty0 (1-2):\penalty0 83--101, 2019.

\bibitem[Tsybakov(2009)]{Tsy09}
A.~B. Tsybakov.
\newblock \emph{Introduction to nonparametric estimation}.
\newblock Springer Series in Statistics. Springer, 2009.

\bibitem[van Laarhoven and Aarts(1987)]{VA87}
P.~J.~M. van Laarhoven and E.~H.~L. Aarts.
\newblock \emph{Simulated annealing: theory and applications}, volume~37 of
  \emph{Mathematics and its Applications}.
\newblock D. Reidel Publishing Co., 1987.

\bibitem[Vempala and Wibisono(2019)]{VW19}
S.~Vempala and A.~Wibisono.
\newblock Rapid convergence of the {U}nadjusted {L}angevin {A}lgorithm:
  isoperimetry suffices.
\newblock In \emph{NeurIPS}, volume~32, pages 8094--8106, 2019.

\bibitem[Wang et~al.(2020)Wang, Zariphopoulou, and Zhou]{WZZ}
H.~Wang, T.~Zariphopoulou, and X.~Y. Zhou.
\newblock Reinforcement learning in continuous time and space: A stochastic
  control approach.
\newblock \emph{Journal of Machine Learning Research}, 21:\penalty0 1--34,
  2020.

\bibitem[Zitt(2008)]{Zitt08}
P.~A. Zitt.
\newblock Annealing diffusions in a potential function with a slow growth.
\newblock \emph{Stochastic Process. Appl.}, 118\penalty0 (1):\penalty0 76--119,
  2008.

\end{thebibliography}
\end{document}